\documentclass[11pt]{article}

\usepackage{tikz}
\tikzstyle{vertex}=[circle, draw, inner sep=0pt, minimum size=5pt]

\usetikzlibrary{decorations.markings}
\usepackage{scalefnt}

\usepackage[english]{babel}
\usepackage{times}
\usepackage{amsthm}
\usepackage{amssymb, verbatim}
\usepackage{amsmath}
\usepackage{eucal}
\usepackage{mathrsfs}

\setlength{\textwidth}{15cm}
\setlength{\evensidemargin}{.3cm}
\setlength{\oddsidemargin}{.3cm}
\topmargin = -40pt
\textheight = 690pt

\usepackage{mathabx}

\newtheorem{theorem}{Theorem}[section]
\newtheorem{corollary}[theorem]{Corollary}
\newtheorem{lemma}[theorem]{Lemma}
\newtheorem{proposition}[theorem]{Proposition}

\theoremstyle{definition}

\newtheorem{remark}[theorem]{Remark}
\newtheorem{da-scrivere}[theorem]{da-scrivere}

\newcommand{\Ad}{{\mathrm {Ad}}}
\newcommand{\Aut}{{\mathrm {Aut}}}
\newcommand{\out}{{\mathrm {Out}}}

\newcommand{\End}{\operatorname{End}}

\def\D{{\cal D}}

\def\F{{\cal F}}

\def\O{{\cal O}}

\def\Q{{\cal Q}}

\def\U{{\cal U}}

\begin{document}
\title{Diagonal automorphisms of the $2$-adic ring $C^*$-algebra}
\author{Valeriano Aiello$^\dag$, Roberto Conti$^\sharp$, Stefano Rossi$^\natural$
\\
\\
$^\dag$ Department of Mathematics\\ Vanderbilt University \\1362 Stevenson Center, Nashville, TN 37240, USA\\
valerianoaiello@gmail.com \\
\\
$^\sharp$ Dipartimento di Scienze di Base e Applicate per l'Ingegneria \\ Sapienza Universit\`a di Roma \\ Via A. Scarpa 16,
I-00161 Roma, Italy\\
roberto.conti@sbai.uniroma1.it\\
\\
$^\natural$ Dipartimento di Matematica,\\ Universit\`a di Roma Tor
Vergata\\ Via della Ricerca Scientifica 1, I--00133 Roma, Italy\\
rossis@mat.uniroma2.it}
\date{}
\maketitle

\begin{abstract}
The $2$-adic ring $C^*$-algebra $\Q_2$ naturally contains a copy of the Cuntz algebra $\O_2$ and, a fortiori, also of its diagonal subalgebra $\D_2$ with Cantor spectrum. This paper is aimed at studying 
the group $\Aut_{\D_2}(\Q_2)$ of the automorphisms of $\Q_2$ fixing $\D_2$  pointwise. It turns out that any such automorphism leaves $\O_2$ globally invariant. 
Furthermore, the subgroup $\Aut_{\D_2}(\Q_2)$ is shown to be  maximal abelian in $\Aut(\Q_2)$.
Saying exactly what the group is  amounts to understanding when an automorphism of $\O_2$ that fixes $\D_2$ pointwise extends to $\Q_2$.
A complete answer is given for all localized automorphisms: these will extend  if and only if they are the composition of a localized inner automorphism with a 
gauge automorphism.

\end{abstract}

\section{Introduction}
As soon as the Cuntz algebras were introduced in \cite{Cuntz1}, it was quickly realized that studying their endomorphisms and automorphisms  would initiate a fruitful research season.
The forecast could not possibly be more accurate, for after nearly forty years they continue to be a major topic and a source of inspiration, as  demonstrated by the recent literature that has been accumulating at an impressive rate,
see e.g. \cite{ContiSzymanski,CKS-MoreLoc,Conti,CRS,CHSCrelle,CHS,CHSJFA,CHS2015}.  
Motivated by these works,
we found it natural to ask ourselves whether such a case study would also provide the right tools to analyze the endomorphisms and automorphisms of other classes of $C^*$-algebras, 
notably those recently associated with rings, fields and other algebraic objects.
In fact, this was one of the main reasons why in \cite{ACR} we started an investigation of the group $\Aut(\Q_2)$ of unital $*$-preserving automorphisms of the so-called dyadic ring $C^*$-algebra of the integers $\Q_2$, a known $C^*$-algebra (see e.g. \cite{LarsenLi} and the references therein) associated to the semidirect product semigroup 
$${\mathbb Z} \rtimes \{1,2,2^2,2^3,\ldots\}$$ 
that contains a copy of the Cuntz algebra $\O_2$ in a canonical way. Inter alia, it was proved in \cite{ACR}  the useful fact that the canonical diagonal $\D_2$ maintains the property of being a maximal abelian subalgebra (MASA) in $\Q_2$ also. Actually, more is known, for $\D_2$ is even a Cartan subalgebra of both $\O_2$ and $\Q_2$ \cite[Prop. 4.3]{HaOl}. 
The study there initiated is further developed in the present paper. In particular, our main focus is here on the structure of the set of those automorphisms of $\Q_2$ leaving the diagonal MASA $\D_2$ pointwise fixed, which will always be denoted by  $\Aut_{\D_2}(\Q_2)$. 
Rather interestingly, this group turns out to be a maximal abelian subgroup of $\Aut(\Q_2)$. Moreover, we show that any of its elements restricts to an automorphism of $\O_2$ and it is indeed the unique extension of its restriction. 
It immediately follows from  the analysis carried out in \cite{Cuntz} that such restrictions are automorphisms of $\O_2$ induced by unitaries in $\D_2$, henceforth referred to as diagonal automorphisms for short. The results here obtained lend further support to the idea, already expressed in \cite{ACR}, that the group of automorphisms of $\Q_2$ is, in a sense, considerably smaller than that of $\O_2$, thus making it reasonable to ask the challenging question whether this group may be computed explicitly up to inner automorphisms.
Indeed, we show that any extendible localized diagonal automorphism of $\O_2$ is necessarily 
the product of a gauge automorphism and a localized inner diagonal automorphism.
The general case is still out of the reach of the techniques used in this paper instead. Even so, we do spot a necessary and sufficient condition for a diagonal automorphism to extend. 
Despite all our efforts to exploit the condition, to date this has not aided us in deciding whether the only extendible diagonal automorphisms of $\O_2$ are products of gauge and inner diagonal automorphisms.
It is quite possible that the answer to this problem will also require to delve further into the fine ergodic properties of the odometer map.

\section{Preliminaries and notations}
We  recall the basic definitions and properties of the $2$-adic ring $C^*$-algebra  so as to make the reading of the present paper suitable for a broader, not necessarily specialized, audience. 
This is the universal $C^*$-algebra $\Q_2$ generated by an isometry $S_2$ and a unitary $U$ such that $S_2S_2^*+US_2S_2^*U^*=1$ and $S_2U=U^2S_2$.
A very informative account of its most relevant properties is given in \cite{LarsenLi} as well as in our former work \cite{ACR}. As far as the purposes of this work are concerned, it is important to
mention that the Cuntz algebra $\O_2$, i.e. the universal $C^*$-algebra generated by two isometries $X_1$ and $X_2$ such that $X_1X_1^*+X_2X_2^*=1$, can be thought of as a subalgebra of $\Q_2$
via the injective unital $*$-homomorphism that sends $X_1$ to $US_2$ and $X_2$ to $S_2$. As of now this $*$-homomorphism will always be understood without explicit mention, therefore we  simply
write $\O_2\subset\Q_2$ to refer to the copy of $\O_2$ embedded in $\Q_2$ in this way.  The $2$-adic ring $C^*$-algebra is actually a kind of a more symmetric version of $\O_2$, in which the Cuntz isometries
$S_1$ and $S_2$ are now intertwined by the unitary $U$, to wit $S_2U=US_1$. This circumstance introduces a higher degree of rigidity, which is ultimately responsible for a shortage of outer automorphisms. 
Although not completely computed, the group $\out(\Q_2)$ is nevertheless closely looked over in \cite{ACR}, where it is shown to be considerably smaller than $\out(\O_2)$, while being still uncountable and noncommutative. 
To get a better idea of to what extent the former group is smaller than the latter, it is worthwhile to point up that endomorphisms or automorphisms of $\O_2$ will not in general extend to $\Q_2$, as widely discussed in \cite{ACR}.
Among those that do extend there are the canonical endomorphism, the gauge automorphisms and the flip-flop. Of these only the first two will play an important role in this work. In particular, the gauge automorphisms will
actually play an overriding role, which means they need a bit more exhaustive introduction.\\
On $\O_2$ these are the automorphisms $\alpha_\theta$  acting on each isometry simply multiplying it by
$e^{i\theta}$, i.e. $\alpha_\theta (S_i)=e^{i\theta}S_i$  for $i=1,2$, where $\theta$ is any real number.   The action of the one-dimensional torus $\mathbb{T}$ on $\O_2$ provided by 
the  gauge automorphisms enables us to speak of the gauge invariant subalgebra $\F_2\subset \O_2$, which is by
definition the $C^*$-subalgebra  whose elements are fixed by all the ${\alpha_\theta}$'s.  It is well known that $\F_2$ is the UHF algebra of type $2^\infty$.
Now the gauge automorphisms are immediately seen to extend to automorphisms $\widetilde{\alpha_\theta}$ of the whole $\Q_2$ with $\widetilde{\alpha_\theta}(U)=U$. Less obviously, each $\widetilde{\alpha_\theta}$
is an outer automorphism when it is not trivial, which is proved in \cite{ACR}. Furthermore, the extended gauge automorphisms allow us to consider the gauge invariant subalgebra of $\Q_2$, which we denote by $\Q_2^\mathbb{T}$. 
Among other things, $\Q_2^\mathbb{T}$ is known to be a Bunce-Deddens algebra. 
It is not particularly hard to prove that $\Q_2^{\mathbb{T}}$ can also be described 
as the $C^*$-subalgebra of $\Q_2$ generated from either $\F_2$ or $\D_2$ and $U$, where $\D_2\subset\F_2$ is a notable commutative subalgebra of the Cuntz algebra $\O_2$.
Commonly referred to as the diagonal subalgebra, $\D_2$ is in fact the subalgebra generated by the diagonal projections $P_\alpha\doteq S_\alpha S_\alpha^*$, where for any multi-index $\alpha=(\alpha_1,\alpha_2,\ldots,\alpha_k)\in\bigcup_n\{1,2\}^n$
the isometry $S_\alpha$ is the product $S_{\alpha_1}S_{\alpha_2}\ldots S_{\alpha_k}$. The multi-index notation is rather convenient when making computations in Cuntz algebras and will be used extensively throughout the paper.
In particular, we need to recall that $|\alpha|$ is the length of the multi-index $\alpha$.
By definition, the diagonal $\D_2$ is also the inductive limit of the increasing sequence of the finite-dimensional subalgebras $\D_2^k\subset\D_2^{k+1}\subset\D_2$ given by $\D_2^k\doteq\textrm{span}\{P_\alpha: |\alpha|=k\}$,
$k\in\mathbb{N}$.  
That $\D_2$ is quite a remarkable subalgebra is then seen at the level of its spectrum, for the latter is the Cantor set $K$, of which many a concrete topological realization is known. However, in what follows we shall always
think of it as the Tychonov infinite product $\{1,2\}^\mathbb{N}$.\\
Not of less importance is the canonical endomorphism $\varphi\in\End(\O_2)$, which is defined on each element $x\in\O_2$ as $\varphi(x)=S_1xS_1^*+S_2xS_2^*$. 
By its very definition it clearly extends to $\Q_2$, on which 
it still acts as a strongly ergodic map, namely $\bigcap_n\varphi^n(\Q_2)=\mathbb{C}1$, as shown in \cite{ACR}. The intertwining rules $S_i x = \varphi(x)S_i$ for any  $x\in {\mathcal Q}_2$ with $i=1,2$ still hold true.
 In addition, the canonical endomorphism preserves the diagonal $\D_2$, acting on its spectrum as the usual shift map on $\{1,2\}^\mathbb{N}$.\\
To complete the description of our framework we still need to single out a distinguished representation of $\Q_2$ among the many, namely the so-called canonical representation \cite{LarsenLi}. This is the representation in which $S_2$ and $U$
are concrete operators acting on the Hilbert space $\ell_2(\mathbb{Z})$ as $S_2e_k=e_{2k}$ and $Ue_k=e_{k+1}$, for every $k\in\mathbb{Z}$, where $\{e_k:k\in\mathbb{Z}\}$ is the canonical basis of 
$\ell_2(\mathbb{Z})$. Finally, in this representation $\D_2$ can be seen as a norm-closed subalgebra of $\ell_\infty(\mathbb{Z})$, the diagonal operators with respect to the canonical basis. For any $d\in\ell_\infty(\mathbb{Z})$ we
denote by $d(k)$ its $k$th diagonal entries, that is $d(k)\doteq (e_k,de_k)$, $k\in\mathbb{Z}$. It is obvious that $\textrm{Ad}(U)$ leaves $\ell_\infty(\mathbb{Z})$ invariant. It is slightly less obvious that it also leaves $\D_2$ globally invariant.
Moreover, the spectrum of $\D_2$ is acted on by $\textrm{Ad}(U)$ through the homeomorphism given by the so-called odometer on the Cantor set, which is known to be a uniquely ergodic map, see \cite{Davidson}.
Finally, the action of $\textrm{Ad}(U)$ on $\D_2$ is compatible with its inductive-limit structure, i.e. $\textrm{Ad}(U)(\D_2^k)=\D_2^k$ for every $k\in\mathbb{N}$.



\section{General structure results on $\Aut_{\D_2}(\Q_2)$} 
 
Given an inclusion of $C^*$-algebras $A\subseteq B$, we shall denote by $\Aut(B, A)$ the group of those automorphisms of $B$ leaving $A$ globally invariant, and  by $\Aut_A(B)$ the group of those fixing $A$ pointwise. Both endomorphisms and automorphism 
are tacitly assumed to be unital whenever our $C^*$-algebras are unital, as $\D_2$ and $\Q_2$ are.  
Having a general content, the following result may possibly be known, cf. \cite{Cuntz} for instance. Because we have no explicit reference, a precise statement is nevertheless included along with its proof. 
To state it as clearly as possible, however, we still need to set some notations.  In particular, we recall that if $H$ is any subgroup of a group $G$, its normalizer is the largest subgroup $N_H(G)$  in which $H$ is contained as a normal subgroup. More explicitly, $N_H(G)$ can be identified with the set of those $g\in G$ such that $gHg^{-1} = H$. Finally, if $S$ is any subset of $\Aut(B)$, we denote by $B^S$ the sub-$C^*$-algebra of $B$ whose elements are fixed by all automorphisms of $S$. Here follows the result.
\begin{proposition}\label{Galois}
Let $A \subseteq B$ be a unital inclusion of $C^*$-algebras. Then 
\begin{enumerate}
\item $\Aut(B,A) \subseteq N_{\Aut_A(B)}(\Aut(B))$;
\item If $B^{\Aut_A(B)} = A$ one has $\Aut(B,A) = N_{\Aut_A(B)}(\Aut(B))$;
\item If $A$ is a MASA in $B$, then $B^{\Aut_A(B)} = A$ and  $\Aut(B,A) = N_{\Aut_A(B)}(\Aut(B))$.
\end{enumerate}
\end{proposition}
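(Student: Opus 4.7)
The proposition splits into three increasingly strong statements, so I would attack them in order, each building on the tools set up for the previous one.

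For part (1), the plan is simply to unfold the definitions. Given $\alpha\in\Aut(B,A)$ and $\beta\in\Aut_A(B)$, I would check that $\alpha\beta\alpha^{-1}$ fixes $A$ pointwise: for any $a\in A$, the element $\alpha^{-1}(a)$ lies in $A$ because $\alpha(A)=A$, hence $\beta$ fixes it, and applying $\alpha$ gives back $a$. This shows $\alpha\Aut_A(B)\alpha^{-1}\subseteq\Aut_A(B)$, and the same argument applied to $\alpha^{-1}$ (which also belongs to $\Aut(B,A)$) yields the reverse inclusion, so $\alpha$ normalizes $\Aut_A(B)$.

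For part (2), the task is to prove the reverse inclusion under the hypothesis $B^{\Aut_A(B)}=A$. Given $\alpha\in N_{\Aut_A(B)}(\Aut(B))$, I would pick $a\in A$ and show that $\alpha(a)$ is fixed by every $\beta\in\Aut_A(B)$: indeed, $\alpha^{-1}\beta\alpha$ still lies in $\Aut_A(B)$ by the normalizing property, hence fixes $a$, which rearranges to $\beta(\alpha(a))=\alpha(a)$. Thus $\alpha(a)\in B^{\Aut_A(B)}=A$, so $\alpha(A)\subseteq A$. Running the same argument with $\alpha^{-1}$ in place of $\alpha$ (the normalizer is a subgroup) gives $\alpha^{-1}(A)\subseteq A$, hence $\alpha(A)=A$.

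For part (3), in view of (2) it suffices to establish $B^{\Aut_A(B)}=A$ whenever $A$ is a MASA. The inclusion $A\subseteq B^{\Aut_A(B)}$ is tautological, so the work is in the opposite direction. The key idea is to exhibit enough automorphisms fixing $A$ pointwise: for any unitary $u\in A$, the inner automorphism $\Ad(u)$ restricts to the identity on $A$ by commutativity, hence $\Ad(u)\in\Aut_A(B)$. So if $b\in B^{\Aut_A(B)}$, then $ubu^*=b$ for every unitary $u\in\mathcal{U}(A)$, i.e.\ $b$ commutes with every unitary in $A$. Since $A$ is the norm-closed linear span of its unitaries, $b\in A'\cap B=A$, where the last equality uses that $A$ is maximal abelian.

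The only nontrivial input is the use of inner automorphisms by unitaries of $A$ in part (3); once these are in hand, everything else is formal manipulation. No subtle analytic difficulty arises, so I expect no serious obstacle beyond keeping the group-theoretic bookkeeping straight.
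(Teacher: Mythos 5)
Your proposal is correct and follows essentially the same route as the paper: unwinding the conjugation identity for (1), using the normalizer property plus $B^{\Aut_A(B)}=A$ for (2), and invoking inner automorphisms $\Ad(u)$, $u\in\mathcal{U}(A)$, together with maximal abelianness for (3). The only difference is that you spell out the two-sided inclusions (applying the argument to $\alpha^{-1}$ as well), which the paper leaves implicit.
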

\begin{proof}
For the first property, let $\gamma\in \Aut(B,A)$ and $\beta\in \Aut_A(B)$. Since $\gamma\circ\beta\circ\gamma^{-1}(a)=a$ for all $a\in A$, we get that $\gamma$ belongs to $N_{\Aut_A(B)}(\Aut(B))$.
\noindent
For the second, consider $\gamma\in  N_{\Aut_A(B)}(\Aut(B))$ and $\beta\in \Aut_A(B)$. We have that $\gamma\circ\beta\circ\gamma^{-1}(a)=a$ for all $a\in A$, hence $\beta\circ\gamma^{-1}(a)=\gamma^{-1}(a)$. This means that $\gamma^{-1}(a)\in B^{\Aut_A(B)}$ which is by hypothesis equal to $A$, thus $\gamma\in \Aut(B,A)$.
Finally, for the third we observe that any $u\in \U(A)$ gives rise to an element of $\Aut_A(B)$, namely $\textrm{Ad}(u)$. This means that for any $x\in B^{\Aut_A(B)}$ we have $uxu^*=x$. Since $A$ is a MASA, then $x\in A$ and the second property in turn implies that $\Aut(B,A) = N_{\Aut_A(B)}(\Aut(B))$.
\end{proof}
By applying the former result to the inclusion $\D_2\subset\Q_2$, the set equality $\Q_2^{\Aut_{\D_2}(\Q_2)} = \D_2$ is immediately got to. Now $\Aut_{\D_2}(\Q_2)$ contains both $\{\widetilde{\alpha_\theta} \ | \ \theta \in {\mathbb R}\}$ and 
$\{\textrm{Ad}(u),  \ u \in \U(\D_2)\}$, where $\widetilde{\alpha_\theta}$ is the unique extension to $\Q_2$ of the gauge automorphism $\alpha_\theta\in\Aut(\O_2)$, see \cite{ACR}. Furthermore, the intersection  $\Aut_{\D_2}(\Q_2) \bigcap {\rm Inn}(\Q_2)$ 
is easily seen to reduce to $\{\textrm{Ad}(u),  u \in \U(\D_2)\}$ thanks to maximality of $\D_2$ again. Since $\D_2$ is globally invariant under $\textrm{Ad}(U)$, for any $\alpha \in \Aut_{\D_2}(\Q_2)$ we have
$$\alpha(U d U^*) = U d U^* = \alpha(U) d \alpha(U)^*\,\textrm{for all}\, d \in \D_2 \ $$
that is $U^* \alpha(U)$ commutes with every $d \in \U(\D_2)$ and therefore by maximality $U^*\alpha(U)=\dj_\alpha$ for some $\dj_\alpha \in \U(\D_2)$, which we rewrite as
$$\alpha(U) = U \dj_\alpha$$ 
It is also clear that $$\alpha^{-1}(U) = U \dj_\alpha^*$$
To take but one example, when $\alpha$ is an inner automorphism, say $\textrm{Ad}(u)$ for some $u$ in $\U(\D_2)$, the corresponding  $\dj_{\textrm{Ad} (u)}$ is nothing but  $U^* u U u^*$. More importantly, the map $\alpha \mapsto \dj_\alpha$ is easily  recognized to be a group homomorphism between $\Aut_{\D_2}(\Q_2)$ and $\U(\D_2)$. Moreover, its kernel coincides with $\Aut_{\Q_2^{\mathbb T}}(\Q_2)$, cf. Proposition \ref{ker-check}. 
Our next goal is to show that the Cuntz-Takesaki unitary $u_\alpha\doteq\alpha(S_1)S_1^*+\alpha(S_2)S_2^*$ belongs to $\D_2$ as well.
\begin{proposition}
Let $\alpha$ be in $\Aut_{\D_2}(\Q_2)$. Then the corresponding unitary $u_\alpha$ lies in $\D_2$. 
\end{proposition}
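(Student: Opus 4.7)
The plan is to exploit the fact---established earlier in this section---that $\D_2$ is a MASA in $\Q_2$, so that it suffices to check $u_\alpha$ is a unitary in $\Q_2$ commuting with every element of $\D_2$. That $u_\alpha\in\Q_2$ and is unitary is immediate from the Cuntz relations $S_i^*S_j=\delta_{ij}$ together with $\alpha(S_i)^*\alpha(S_j)=\delta_{ij}$; note also that $u_\alpha S_i=\alpha(S_i)$ by a direct one-line computation, which is the identity we really want to leverage.

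The key observation is that for any $d\in\D_2$ and $i\in\{1,2\}$, the element $S_idS_i^*$ already lies in $\D_2$. Indeed, using the intertwining rule $S_ix=\varphi(x)S_i$, one has $S_idS_i^*=\varphi(d)P_i$, which belongs to $\D_2$ because $\varphi(\D_2)\subseteq\D_2$. As a consequence $\alpha$ fixes each such element, so on the one hand $\alpha(S_idS_i^*)=S_idS_i^*$, while on the other hand $\alpha(S_idS_i^*)=\alpha(S_i)\,\alpha(d)\,\alpha(S_i)^*=u_\alpha S_idS_i^*u_\alpha^*$, using $\alpha(d)=d$. Comparing the two expressions, $u_\alpha$ commutes with $S_idS_i^*$ for every $d\in\D_2$ and $i=1,2$.

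To finish, I would observe that the family $\{S_idS_i^*:d\in\D_2,\ i=1,2\}$ generates $\D_2$ as a $C^*$-algebra: specializing $d=P_\beta$ yields $S_iP_\beta S_i^*=P_{i\beta}$, and as $\beta$ ranges over all multi-indices and $i\in\{1,2\}$ these exhaust every $P_\gamma$ with $|\gamma|\geq 1$, which together with the unit span $\D_2$. Hence $u_\alpha$ lies in the relative commutant of $\D_2$ in $\Q_2$, and maximal abelianness forces $u_\alpha\in\D_2$. I do not foresee any serious obstacle in this line of argument; the only point worth flagging is the need to recognize that conjugation by $S_i$ keeps $\D_2$ inside itself, which is precisely the mechanism that converts the pointwise-fixing hypothesis into a commutation statement for $u_\alpha$ against a generating subset of the MASA.
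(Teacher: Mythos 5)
Your proof is correct and follows essentially the same route as the paper: both reduce, via the MASA property of $\D_2$ in $\Q_2$, to showing that $u_\alpha$ commutes with the generating projections $P_\gamma$, using the identity $\alpha(S_i)=u_\alpha S_i$ together with the pointwise fixing of $\D_2$. Your only deviation is cosmetic: by noting $S_i\D_2 S_i^*\subseteq\D_2$ you replace the paper's induction on the length of the multi-index with a direct one-step computation, which is a harmless streamlining of the same argument.
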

\begin{proof}
First we observe that $\alpha(S_i)=u_\alpha S_i$  for $i=1,2$.
By maximality of $\D_2\subset\Q_2$, it is enough to prove that $u_\alpha$ commutes with the generating projections $P_{i_1i_2\ldots i_k}$ of $\D_2$. This can be easily seen by induction on $k$, as done by Cuntz for $\O_2$. The case of length one reduces to the computation $P_i=\alpha(P_i)=\alpha(S_iS_i^*)=u_\alpha S_iS_i^*u_\alpha^*=u_\alpha P_i u_\alpha^*$. The case of lenght two entails the computation $P_{ij}=\alpha(P_{ij})=\alpha(S_iS_jS_j^*S_i^*)=u_\alpha S_i (u_\alpha S_jS_j^*u_\alpha^*)S_i^*u_\alpha^*=u_\alpha S_i S_jS_j^*S_i^*u_\alpha^*=u_\alpha P_{ij} u_\alpha^*$. It is now clear how to go on.
\end{proof}
The following result can be derived at once from the foregoing proposition. In this respect, it is worth  recalling the fact, proved in \cite{Cuntz}, that there exists an explicit group isomorphism between
$\U(\D_2)$ and $\Aut_{\D_2}(\O_2)$ given by $d \mapsto \lambda_d$, where $\lambda_d(S_i) = d S_i$, $i=1,2$, already showing the abelianness of $\Aut_{\D_2}(\O_2)$. 
\begin{corollary}
Any $\alpha\in\Aut_{\D_2}(\Q_2)$ restricts to an automorphism of the Cuntz algebra $\O_2$ fixing pointwise the diagonal $\D_2$ and it is the unique extension
of such restriction. In particular, the group $\Aut_{\D_2}(\Q_2)$ is abelian.
\end{corollary}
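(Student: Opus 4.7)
The plan is to derive all three assertions from the preceding proposition combined with the formula $\alpha(U) = U \dj_\alpha$ already established in the discussion before it. Since $u_\alpha \in \D_2 \subset \O_2$, the identity $\alpha(S_i) = u_\alpha S_i$ shows $\alpha(S_i) \in \O_2$ for $i=1,2$, so $\alpha(\O_2) \subseteq \O_2$; the reverse inclusion is obtained by applying the same observation to $\alpha^{-1} \in \Aut_{\D_2}(\Q_2)$. The restriction $\alpha|_{\O_2}$ is therefore an automorphism of $\O_2$ that still fixes $\D_2$ pointwise.

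The substantive step is uniqueness of the extension. Suppose $\beta \in \Aut_{\D_2}(\Q_2)$ restricts to the identity on $\O_2$; I want to show $\beta = \mathrm{id}$. Since $\Q_2$ is generated by $\O_2$ together with $U$, it is enough to prove $\beta(U) = U$, i.e., $\dj_\beta = 1$. I would extract two independent constraints from the defining relations of $\Q_2$: applying $\beta$ to $S_1 = US_2$ yields $U \dj_\beta S_2 = US_2$, hence $\dj_\beta P_2 = P_2$; applying $\beta$ to $S_2 U = US_1$ yields $US_1 \dj_\beta = U \dj_\beta S_1$, hence after cancelling $U$ the commutation $S_1 \dj_\beta = \dj_\beta S_1$. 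Through the Gelfand identification $\D_2 \cong C(\{1,2\}^{\mathbb{N}})$ the first constraint says that $\dj_\beta$ equals $1$ on the clopen cylinder of sequences beginning with $2$, while the second translates (multiplying by $S_1^*$ on the right and reading $S_1 \dj_\beta S_1^* = \varphi(\dj_\beta) P_1$) into $\dj_\beta(1\omega) = \dj_\beta(\omega)$ for all $\omega \in \{1,2\}^{\mathbb{N}}$. Iterating the second relation and invoking the first shows $\dj_\beta \equiv 1$ on the dense subset of $\{1,2\}^{\mathbb{N}}$ consisting of sequences that contain at least one $2$, and continuity forces $\dj_\beta \equiv 1$.

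Once uniqueness is in place, the restriction map $\Aut_{\D_2}(\Q_2) \to \Aut_{\D_2}(\O_2)$ is a well-defined injective group homomorphism landing in an abelian group, thanks to the Cuntz isomorphism $\U(\D_2) \cong \Aut_{\D_2}(\O_2)$ recalled in the paragraph preceding the statement. Abelianness of $\Aut_{\D_2}(\Q_2)$ is then immediate. The only mildly delicate point is the uniqueness step, whose crux is the density argument on the Cantor set; the two algebraic constraints on $\dj_\beta$ are precisely what is needed to propagate the value $1$ from the cylinder of sequences starting with $2$ to the whole spectrum.
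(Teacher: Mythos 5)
Your proposal is correct, and it diverges from the paper's proof in two of the three steps. For surjectivity of $\alpha$ on $\O_2$ the paper computes $\alpha(u_\alpha^*S_i)=u_\alpha^*u_\alpha S_i=S_i$ directly, whereas you apply the containment to $\alpha^{-1}\in\Aut_{\D_2}(\Q_2)$; both are fine. For uniqueness the paper simply invokes the rigidity theorem of \cite{ACR} (two automorphisms of $\Q_2$ agreeing on $\O_2$ coincide), while you give a self-contained argument: writing $\beta(U)=Ub$ with $b\in\U(\D_2)$ (which still rests on the MASA property established earlier), you extract $bS_2=S_2$ and $S_1b=bS_1$ from the defining relations, translate these on the Cantor spectrum into $b\equiv 1$ on the cylinder $[2]$ and $b(1\omega)=b(\omega)$, and propagate the value $1$ to the dense complement of the point $111\ldots$; this computation is correct. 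What you prove is formally weaker than the cited rigidity (uniqueness only among automorphisms fixing $\D_2$ pointwise), but it suffices here because any automorphism of $\Q_2$ extending $\alpha\upharpoonright_{\O_2}$ automatically fixes $\D_2\subset\O_2$ pointwise and hence lies in $\Aut_{\D_2}(\Q_2)$, after which composing with $\alpha^{-1}$ reduces to your situation; you should state this reduction explicitly, since as written you only treat $\beta\in\Aut_{\D_2}(\Q_2)$ restricting to the identity. Finally, for abelianness the paper checks commutation directly on the generators $U$ and $S_2$ using commutativity of $\D_2$, whereas you use injectivity of the restriction map together with Cuntz's isomorphism $\Aut_{\D_2}(\O_2)\cong\U(\D_2)$; your route buys independence from the external reference and a cleaner structural explanation (an embedding into an abelian group), at the cost of a slightly longer argument, while the paper's route is shorter and yields the stronger rigidity statement as the real engine behind uniqueness.
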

\begin{proof}
It is clear from the previous proposition that $\alpha(S_i) \in \O_2$ for $i=1,2$, so that $\alpha(\O_2) \subseteq \O_2$. Now, $u_\alpha^* S_i \in \O_2$ and $\alpha(u_\alpha^* S_i) = u_\alpha^* \alpha(S_i) = u_\alpha^* u_\alpha S_i = S_i$, $i=1,2$, thus showing that indeed $\alpha(\O_2) = \O_2$. We conclude that $\alpha$ is an extension to $\Q_2$ of its restriction to $\O_2$,
and the statement about uniqueness follows at once from the rigidity result, proved in \cite[Sect. 4]{ACR}, that two automorphisms of $\Q_2$, coinciding on $\O_2$, must be the same.
For the last claim, if $\alpha_i \in \Aut_{\D_2}(\Q_2)$, $i=1,2$, we compute $\alpha_1(\alpha_2(U)) = \alpha_1(U \dj_{\alpha_2}) = U \dj_{\alpha_1} \dj_{\alpha_2} = U \dj_{\alpha_2} \dj_{\alpha_1}= \alpha_2(\alpha_1(U))$ and, similarly, $\alpha_1(\alpha_2(S_2)) = \alpha_1(u_{\alpha_2}(S_2)) = u_{\alpha_2} u_{\alpha_1} S_2= \alpha_2(\alpha_1(S_2))$. The conclusion readily follows.
\end{proof}
Every element in $\Aut_{\D_2}(\Q_2)$ can thus be written as the unique extension of an element $\lambda_d \in \Aut_{\D_2}(\O_2)$, for some $d \in \U(\D_2)$. We denote such extension as $\widetilde{\lambda_d}$. However, one should not expect  all automorphisms $\lambda_d$ with $d \in \U(\D_2)$ to extend to $\Q_2$.
Denoting by $\widetilde{\U}(\D_2)$ the set of all $d \in \U(\D_2)$ such that $\lambda_d$ is extendible to an automorphism of $\Q_2$, it is then easy to deduce from the above discussion that $\widetilde{\U}(\D_2)$ is a actually a group (a subgroup of $\U(\D_2)$) and there exists a group isomorphism between $\widetilde{\U}(\D_2)$ and $\Aut_{\D_2}(\Q_2)$ given by $d \mapsto \widetilde{\lambda_d}$. 
Thus far we have seen that any $\alpha \in \Aut_{\D_2}(\Q_2)$ acts on $U$ as $\alpha(U) = U \dj_\alpha$ for some $\dj_\alpha \in \D_2$.  Now it is also possible to rewrite this
relation in the form $\alpha(U) = \check{\dj}_\alpha U$, where $\check{\dj}_\alpha$ is simply given by $U \dj_\alpha U^*$ and is still a unitary of $\D_2$. As $\alpha = \widetilde{\lambda_d}$,  we can simply write $\check{d}$ instead of 
$\check{\dj}_\alpha = \check{\dj}_{\widetilde{\lambda_d}}$ for $d \in \widetilde{\U}(\D_2)$.  For the same reason as above, the map $d\mapsto\check{d}$ is a group homomorphism from $\widetilde{\U}(\D_2)$ to $\U(\D_2)$. In fact, this map will turn out to be vital in the next sections. Contrary to 
what one might expect, though, it has proved to be a difficult task to establish a priori whether it is norm continuous, possibly because determining its domain $\widetilde{\U}(\D_2)$
is just another way to recast our main problem. Nevertheless, its kernel can be described quite explicitly.
\begin{proposition}\label{ker-check}
The kernel of the map $d\mapsto\check{d}$ (defined on $\widetilde\U(\D_2)$) is the subgroup of the gauge automorphisms. Actually, one has 
$$\Aut_{\Q_2^{\mathbb{T}}}(\Q_2)=\Aut_{\F_2}(\Q_2)=\{\widetilde{\alpha_\theta}: \theta\in\mathbb{R}\} \ .$$
\end{proposition}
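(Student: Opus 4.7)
The plan is to unfold the kernel condition $\check{d}=1$ into the statement $\alpha(U)=U$, identify the resulting subgroup with $\Aut_{\Q_2^{\mathbb T}}(\Q_2)$, and then reduce the whole question to showing that $\Aut_{\F_2}(\Q_2)\subseteq\{\widetilde{\alpha_\theta}\}$. For $\alpha=\widetilde{\lambda_d}$ the equation $\check{d}=U\dj_\alpha U^*=1$ is equivalent to $\dj_\alpha=1$, i.e.\ to $\alpha(U)=U$; combined with $\alpha|_{\D_2}=\mathrm{id}$, this is exactly the condition that $\alpha$ fixes the $C^*$-subalgebra generated by $\D_2$ and $U$, which by the preliminaries is $\Q_2^{\mathbb T}$. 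Since every extended gauge automorphism $\widetilde{\alpha_\theta}$ fixes $U$ and $\D_2$ pointwise, it fixes $\Q_2^{\mathbb T}$, and the chain
$$\{\widetilde{\alpha_\theta}:\theta\in\mathbb{R}\}\subseteq\Aut_{\Q_2^{\mathbb T}}(\Q_2)\subseteq\Aut_{\F_2}(\Q_2)$$
is automatic (the last inclusion because $\F_2\subset\Q_2^{\mathbb T}$). All three sets will coincide once the reverse inclusion $\Aut_{\F_2}(\Q_2)\subseteq\{\widetilde{\alpha_\theta}\}$ is proved.

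For this, let $\alpha\in\Aut_{\F_2}(\Q_2)$. Since $\D_2\subset\F_2$, the preceding corollary yields $\alpha=\widetilde{\lambda_d}$ for a unique $d\in\widetilde{\U}(\D_2)$, with $\alpha(S_\mu)=w_{|\mu|}S_\mu$ and $w_k=d\,\varphi(d)\cdots\varphi^{k-1}(d)$. Imposing $\alpha(S_\mu S_\nu^*)=S_\mu S_\nu^*$ for all $|\mu|=|\nu|=k$ forces $w_k$ to commute with the matrix units generating $M_{2^k}$. Writing $w_k=\sum_{|\beta|=k}S_\beta(S_\beta^*w_kS_\beta)S_\beta^*$, valid because $w_k\in\D_2$, and comparing $w_kS_\mu S_\nu^*$ with $S_\mu S_\nu^*w_k$, one sees that the compressions $S_\beta^*w_kS_\beta$ must all coincide with a single unitary $e_k\in\D_2$, yielding $w_k=\varphi^k(e_k)\in\varphi^k(\D_2)$.

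The last step is an induction showing that $d$ itself lies in $\varphi^k(\D_2)$ for every $k$. The base $k=1$ is just $d=w_1\in\varphi(\D_2)$. For the step, if $d=\varphi^k(e)$ with $e\in\U(\D_2)$, then $w_{k+1}=\varphi^k(e\,\varphi(e)\cdots\varphi^k(e))$ also lies in $\varphi^{k+1}(\D_2)$; injectivity of $\varphi^k$ gives $e\,\varphi(e)\cdots\varphi^k(e)\in\varphi(\D_2)$, and since each factor from the second onwards is already in $\varphi(\D_2)$, so is $e$, whence $d\in\varphi^{k+1}(\D_2)$. Therefore $d\in\bigcap_k\varphi^k(\D_2)\subseteq\bigcap_k\varphi^k(\Q_2)=\mathbb{C}$ by the strong ergodicity of $\varphi$ recalled in the preliminaries, and unitarity forces $d=e^{i\theta}$ and $\alpha=\widetilde{\alpha_\theta}$. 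I expect the decisive obstacle to lie precisely in this bootstrap, where the multiplicative structure of the cocycle $(w_k)$ must interact with injectivity of $\varphi$ to upgrade a statement about the products $w_k$ to a statement about $d$; the rest is either immediate from earlier results or a routine computation in the diagonal.
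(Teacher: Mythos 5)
Your proposal is correct, and it reaches the key inclusion $\Aut_{\F_2}(\Q_2)\subseteq\{\widetilde{\alpha_\theta}\}$ by a genuinely different route from the paper. The paper's proof is a two-line reduction: $\check{d}=1$ means $\widetilde{\lambda_d}(U)=U$, hence $\widetilde{\lambda_d}$ fixes $\Q_2^{\mathbb T}=C^*(\D_2,U)$ pointwise, so its restriction lies in $\Aut_{\O_2^{\mathbb T}}(\O_2)=\Aut_{\F_2}(\O_2)$, and then Cuntz's theorem that the automorphisms of $\O_2$ fixing $\F_2$ pointwise are exactly the gauge automorphisms is invoked as a black box. You instead reprove the needed instance of that theorem from scratch: since the earlier corollary already forces the implementing unitary into $\D_2$, only the diagonal case is required, and your cocycle computation ($\alpha(S_\mu)=w_{|\mu|}S_\mu$, commutation with the matrix units of $\F_2^k$ forcing all compressions $S_\beta^* w_k S_\beta$ to agree, hence $w_k=\varphi^k(e_k)$), followed by the bootstrap via injectivity of $\varphi$ and the strong ergodicity $\bigcap_n\varphi^n(\Q_2)=\mathbb{C}1$ cited in the preliminaries, is sound at every step; the identification of the kernel with $\Aut_{\Q_2^{\mathbb T}}(\Q_2)$ and the chain of inclusions closing into equalities are also handled correctly. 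What the paper's approach buys is brevity and reliance on a classical result; what yours buys is a self-contained argument within the toolkit the paper has already set up (the extension corollary, the intertwining relation $S_ix=\varphi(x)S_i$, and the [ACR] ergodicity statement), at the cost of redoing, in the diagonal setting, a special case of Cuntz's theorem.
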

\begin{proof}
Clearly the condition $\check{d}=1$ is the same as $\widetilde{\lambda_d}(U)=U$. Since $\Q_2^{\mathbb{T}}$ coincides with $C^*(U,\D_2)$, see e.g. \cite[Sect. 2]{ACR}, it means that $\widetilde{\lambda_d}\in\Aut_{\Q_2^{\mathbb{T}}}(\Q_2)$ so that $\lambda_d \in \Aut_{\O_2^{\mathbb{T}}}(\O_2)$. The conclusion now readily follows from
the fact that the automorphisms of $\O_2$ fixing the canonical UHF subalgebra $\F_2 = \O_2^{\mathbb T}$ pointwise are precisely the gauge automorphisms, as proved by Cuntz in \cite{Cuntz}.
\end{proof}

In particular, the restriction map $\Aut_{\D_2}(\Q_2)\ni\lambda\rightarrow\lambda\upharpoonright_{\O_2}\in\Aut_{\D_2}(\O_2)$ induces a group embedding which allows us to think of the fomer group as
a subgroup of the latter. Therefore, as of now we will simply write $\Aut_{\D_2}(\Q_2)\subset\Aut_{\D_2}(\O_2)$ to mean that.  
Of course the inclusion is proper. In other words, not all the automorphisms of $\O_2$ that leave the diagonal $\D_2$ globally invariant will extend. 
As a matter of fact, very few automorphisms can be extended. Although we do not have a general explicit description of all extendible automorphisms yet, we do have a complete description for
a particular class  of  automorphisms. This is just the subgroup $\Aut_{\D_2}(\O_2)_{\textrm{loc}}$ of those localized automorphisms we mentioned above in passing. Actually, the terminology comes  from Quantum Field Theory. Roughly speaking, 
an automorphism is localized when it preserves the union of the matrix subalgebras. More precisely, an automorphism $\lambda_u\in\Aut(\O_n)$ is said to be \emph{localized} when the corresponding unitary $u\in\U(\O_n)$
belongs to the algebraic dense subalgebra $\bigcup_k \F^n_k\subset\O_n$, where $\F^n_k$ is generated by the elements of the form $S_\alpha S_\beta^*$ with $\alpha , \beta\in\{1, \ldots , n\}^k$.
Furthermore, the inclusion $\Aut_{\D_2}(\Q_2)\subset\Aut_{\D_2}(\O_2)$ allows us to define a subgroup $\Aut_{\D_2}(\Q_2)_{\textrm{loc}}$ as the intersection $\Aut_{\D_2}(\Q_2)\bigcap\Aut(\O_2)_{\textrm{loc}}$.
As maintained in the abstract, we will prove that $\Aut_{\D_2}(\Q_2)_{\textrm{loc}}$ is so small that the sole localized automorphisms fixing $\D_2$ that extend are the composition of a localized inner automorphism with a gauge automorphism. \\

Before going on with our discussion, we would like to point out a remark for the sake of completeness.

\begin{remark}\label{extension-auto-D2}
Let $d\in \U(\D_2)$ and consider the associated automorphism $\lambda_d$ of $\O_2$. If $\lambda_d$ extends to an endomorphism $\lambda$ of $\Q_2$, then $\lambda$ is actually an automorphism, i.e. $d \in \widetilde\U(\D_2)$ and $\lambda=\widetilde{\lambda_d}$. Indeed, $\lambda(\Q_2)$ contains $\lambda_d(\O_2)=\O_2$. Moreover, $\lambda(U) = \tilde{d} U$ for a suitable $\tilde{d} \in \U(\D_2)$ (same argument as for automorphisms), so that
$\lambda(\tilde{d}^*U)=\tilde{d}^*\tilde{d}U=U$. All in all, the extension is nothing but $\widetilde{\lambda_d}$ (and $\tilde{d} = \check{d}$). 
\end{remark}

Going back to $\Aut_{\D_2}(\Q_2)$, we have shown it is abelian, but we want to improve our knowledge by proving it is also maximal abelian in $\Aut(\Q_2)$, in a way that closely resembles what happens for the Cuntz algebra $\O_2$ \cite{Cuntz}. 
Here follows the proof.
\begin{theorem}
The subgroup $\Aut_{\D_2}(\Q_2)$ is maximal abelian in $\Aut(\Q_2)$.
\end{theorem}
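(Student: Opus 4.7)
The plan is to show that any $\beta\in\Aut(\Q_2)$ commuting with the whole of $\Aut_{\D_2}(\Q_2)$ must itself belong to $\Aut_{\D_2}(\Q_2)$, i.e.\ must fix $\D_2$ pointwise.

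First, I would exploit that $\Aut_{\D_2}(\Q_2)$ contains $\textrm{Ad}(u)$ for every $u\in\U(\D_2)$. The assumed commutation $\beta\circ\textrm{Ad}(u)=\textrm{Ad}(u)\circ\beta$ rewrites as $\beta(u)\beta(x)\beta(u)^{*}=u\beta(x)u^{*}$ for every $x\in\Q_2$, so $u^{*}\beta(u)$ commutes with every element of $\beta(\Q_2)=\Q_2$. Since $\Q_2$ is simple (cf.\ \cite{LarsenLi}), its center is trivial, and I conclude that $\beta(u)=\chi(u)\,u$ for a scalar $\chi(u)\in\mathbb{T}$. In particular $\beta$ carries $\U(\D_2)$ into $\D_2$, and hence, by linear density, it globally preserves $\D_2$.

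Next I would specialize to the self-adjoint unitaries $v_p:=2p-1$ associated with projections $p\in\D_2$. Since $\beta(v_p)=\chi(v_p)\,v_p$ must again be self-adjoint, one gets $\chi(v_p)\in\{\pm 1\}$, which translates into the dichotomy
$$\beta(p)\in\{p,\,1-p\}\qquad\textrm{for every projection}\ p\in\D_2\ .$$

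The decisive step is then to rule out the ``flip'' alternative. Suppose $\beta(p)=1-p$ for some nonzero projection $p\neq 1$. Since $\D_2$ is infinite-dimensional AF, I can choose a nonzero projection $q\in\D_2$ with $pq=0$ and $p+q\neq 1$ (e.g.\ by subdividing within some $\D_2^k$). Then $\beta(p+q)=(1-p)+\beta(q)$ must again lie in $\{p+q,\,1-(p+q)\}$, and all four combinations lead to a contradiction: if $\beta(q)=q$ one gets either $1-p=p$ or $q=0$; if $\beta(q)=1-q$ one gets either $p+q=1$ or $2=1$. Hence $\beta(p)=p$ for every projection of $\D_2$, and by density of projections in the AF algebra $\D_2$ this forces $\beta|_{\D_2}=\textrm{id}$, i.e.\ $\beta\in\Aut_{\D_2}(\Q_2)$, as required.

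The main technical ingredient is the first step, where simplicity of $\Q_2$ is used to collapse the central element $u^{*}\beta(u)$ to a scalar; without this, one would only know that $u^{*}\beta(u)$ lies in $Z(\Q_2)\cap\D_2$, and the argument could not be closed. The remaining projection gymnastics is elementary once the dichotomy $\beta(p)\in\{p,1-p\}$ is in hand, and needs nothing beyond the existence of two nonzero orthogonal projections in $\D_2$ whose sum is neither $0$ nor $1$.
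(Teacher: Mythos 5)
Your proposal is correct. The opening move coincides with the paper's: commuting with every $\textrm{Ad}(u)$, $u\in\U(\D_2)$, plus triviality of the center of $\Q_2$ yields $\beta(u)=\chi(u)u$ for all unitaries $u\in\U(\D_2)$, hence $\beta$ preserves $\D_2$. Where you diverge is in showing $\beta$ actually fixes $\D_2$ pointwise. The paper's main proof transfers the problem to the spectrum: it writes $\beta|_{\D_2}$ as composition with a homeomorphism $\Phi$ of the Cantor set, observes that $\Phi\neq\mathrm{id}$ would force $\chi$ to be onto $\mathbb{T}$, and contradicts this with a Koopman-operator argument (eigenfunctions of $U_\Phi$ on a separable $L^2$-space for a $\Phi$-invariant measure give at most countably many eigenvalues $\chi(f)$). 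Your route is purely algebraic: from self-adjointness of $\beta(2p-1)=\chi(2p-1)(2p-1)$ you get the dichotomy $\beta(p)\in\{p,1-p\}$ for every projection $p\in\D_2$, and the flip alternative is excluded by testing it against an auxiliary nonzero projection $q\leq 1-p$ with $p+q\neq 1$ (which exists because the Cantor spectrum has no isolated points — your parenthetical ``subdivide within some $\D_2^k$'' should really be phrased as choosing a cylinder projection under $1-p$ and subdividing it if necessary, since $p$ itself need not be localized); density of the span of projections then finishes. This is close in spirit to the paper's own alternative argument recorded in the remark following the theorem, which fixes each $P_\beta$ by exploiting that the spectrum of a unitary $d$ with $\beta(d)=\chi(d)d$ must be invariant under rotation by $\chi(d)$; your version trades that spectral observation for the $\pm1$ dichotomy and a short parity argument. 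What your approach buys is elementarity — no invariant measure, no separability, no ergodic input; what the paper's Koopman argument buys is a statement that works uniformly for all unitaries at once (countability of $\chi(\U(\D_2))$) rather than proceeding projection by projection, but for this theorem both suffice.
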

\begin{proof}
Let $\alpha$ be an automorphism of $\Q_2$ that commutes with $\Aut_{\D_2}(\Q_2)$. In particular $[\alpha,\textrm{Ad}(u)]=0$ for every $u\in\U(\D_2)$, to wit $\textrm{Ad}(\alpha(u))=\textrm{Ad}(u)$. As the center of $\Q_2$ is trivial, we see that $\alpha(u)=\chi(u)u$ for every $u\in\U(\D_2)$, where $\chi$ is a character of the group $\U(\D_2)$. Our result will be proved once we show $\chi(u)=1$ for every $u\in\U(\D_2)$. To this aim, note that the equality $\alpha(u)=\chi(u)u$ says that $\D_2$ is at least globally invariant under the action of $\alpha$.  With a slight abuse of notation, we still denote by $\alpha$ the restriction of $\alpha$ to $\D_2\cong C(K)$, where $K$ is the Cantor set. Let $\Phi\in\textrm{Homeo}(K)$ such that $\alpha(f)=f\circ\Phi$ for every $f\in C(K)$. The identity obtained above is then recast in terms of $\Phi$ as $f\circ\Phi=\chi(f)f$ for every $f\in C(K,\mathbb{T})$. We claim that $\chi(\U(\D_2))\subset\mathbb{T}$ is at most countable. If so, the theorem can now be easily inferred. Indeed, if $\Phi$ is not the identity map, then there exists $x\in K$ such that $\Phi(x)\neq x$. Then pick a function 
$f\in C(K,\mathbb{T})$ such that $f(x)=1$ and $f(\Phi(x))=e^{i\theta}$. The equality $f\circ\Phi=\chi(f)f$ evaluated at $x$ gives
$\chi(f)=e^{i\theta}$, that is $\chi$ is onto $\mathbb{T}$. To really achieve the result we are thus left with the task of proving the claim. This should be quite a standard fact from ergodic theory. However, we do give a complete proof. If $\mu$ is any Borel $\Phi$-invariant measure on $K$ , we can consider the Hilbert space $L^2(K,\mu)$, which is separable because $K$ is metrizable, and the Koopman unitary operator $U_\phi$ associated with $\Phi$, whose action is simply given by $U_\Phi(f)=f\circ\Phi$ a.e. for every $f\in L^2(K)$. As eigenfunctions of $U_\Phi$ associated with different eigenvalues are orthogonal and $\chi(f)$ is an eigenvalue
for every $f\in C(K,\mathbb{T})$, we see that $\{\chi(f):f\in C(K,\mathbb{T})\}$ is a countable set by virtue of separability.
\end{proof}

\begin{remark} 
We can also provide an alternative argument for the above result, proving more directly that $\alpha(P_\beta)=P_\beta$ for all the multi-indeces $\beta$. First of all we observe that the relation $\alpha(d)=\chi (d)d$ implies that the spectrum of the unitary $d$ is invariant under the rotation of $\chi (d)$. We begin with the case of $P_1$. Consider the unitary $d_1=P_1+e^{i 2\pi\theta}P_2$ with       
$\theta\neq \pm 1$. On the one hand, we know that $\alpha(d_1)=\chi(d_1)d_1$. On the other hand, since the spectrum of $d_1$ is not invariant under non-trivial rotations, we find that $\chi(d_1)$ must be $1$. The same reasoning applies to the unitary $\widetilde{d}_1=P_1-e^{i\theta}P_2$ too, and so we get the equality $\chi(\widetilde{d}_1)=1$, hence $\alpha(P_1)=\alpha\left(\frac{d_1+\widetilde{d}_1}{2}\right)=P_1$. We now deal with the general case of a $P_\beta$ with $\beta$ being a multi-index of length $k$ in much the same way. Consider the two unitary operators $d_\beta =\sum_{|\gamma |=k, \gamma\neq \beta} e^{i\theta} P_\gamma + P_\beta$ and $\tilde{d}_\beta =-\sum_{|\gamma |=k, \gamma\neq \beta} e^{i\theta} P_\gamma + P_\beta$. By the same 
argument as above we still find both $\alpha(d_\beta)=d_\beta$ and 
$\alpha(\widetilde d_\beta)=\widetilde d_\beta$,  and thus $\alpha(P_\beta)=\alpha\left(\frac{d_\beta+\tilde{d}_\beta}{2}\right)=P_\beta$ and we are done.
\end{remark}

\section{Necessary and sufficient conditions for extendability}
Thanks to the results achieved in the last section,  giving a complete non-tautological description of $\Aut_{\D_2}(\Q_2)$ entails studying
those unitaries $d\in\U(\D_2)$ for which the corresponding $\lambda_d\in\Aut(\O_2)$ may be extended to $\Q_2$. This section is mainly concerned with problems of this sort. 
When an automorphism $\lambda_d$ extends, we will say every so often  that the corresponding
$d$ is  extendible itself. This is undoubtedly a slight abuse of terminology, but it aids brevity.
Here follows our first result.
\begin{lemma}\label{unirel}
Let $d$ be in $\U(\D_2)$. Then $\lambda_d\in\Aut(\O_2)$ extends to an endomorphism of $\Q_2$ if and only if there exists a $\tilde{d}$ in $\U(\D_2)$ such that
\begin{align}
\tilde{d}UdS_1&=dS_2\tilde{d}U\label{Eq-ext-1}\\
\tilde{d}UdS_2&=dS_1\label{Eq-ext-2}
\end{align}
Moreover, such an extension is automatically an automorphism whenever it exists, i.e. $d \in \widetilde\U(\D_2)$, and $\tilde{d} = \check{d}$.  
\end{lemma}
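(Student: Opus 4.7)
The plan is to produce the two equations as the translations, via $\lambda$, of the relations in $\Q_2$, once we know that $\lambda(U)$ must take the form $\tilde{d}U$ with $\tilde{d}\in\U(\D_2)$. This identification is the only nontrivial ingredient in the forward direction; the converse is then a routine verification of the defining relations of $\Q_2$ followed by an appeal to its universal property.

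\emph{Forward direction.} Suppose $\lambda:\Q_2\to\Q_2$ is an endomorphism extending $\lambda_d$. Then $\lambda$ fixes $\D_2$ pointwise, and since $\textrm{Ad}(U)$ leaves $\D_2$ globally invariant (Section~2), for every $d'\in\D_2$ one has $Ud'U^*\in\D_2$ and hence
\[
\lambda(U)\,d'\,\lambda(U)^* \;=\; \lambda(Ud'U^*) \;=\; Ud'U^*.
\]
Rearranging, $U^*\lambda(U)$ commutes with all of $\D_2$; maximality of $\D_2$ in $\Q_2$ then forces $U^*\lambda(U)\in\U(\D_2)$, so $\lambda(U)=\tilde{d}U$ with $\tilde{d}:=\lambda(U)U^*\in\U(\D_2)$. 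Applying $\lambda$ to $S_1=US_2$ gives $dS_1=\tilde{d}UdS_2$, which is \eqref{Eq-ext-2}. Applying $\lambda$ to $S_2U=U^2S_2$ gives $dS_2\tilde{d}U=\tilde{d}U\tilde{d}UdS_2$, and simplifying the right-hand side via \eqref{Eq-ext-2} yields $\tilde{d}UdS_1$, so \eqref{Eq-ext-1} holds as well.

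\emph{Converse and remaining claims.} Given $\tilde{d}\in\U(\D_2)$ satisfying \eqref{Eq-ext-1} and \eqref{Eq-ext-2}, I would invoke the universal property of $\Q_2$ by setting $\lambda(S_2):=dS_2$ and $\lambda(U):=\tilde{d}U$ and checking the defining relations. Unitarity of $\tilde{d}U$ and isometricity of $dS_2$ are immediate; the Cuntz-type relation reduces, using $UP_2U^*=P_1$ and the fact that $\tilde{d}\in\D_2$ commutes with $P_1$, to $P_2+\tilde{d}P_1\tilde{d}^*=P_2+P_1=1$. The twisting relation $\lambda(S_2)\lambda(U)=\lambda(U)^2\lambda(S_2)$ unfolds as $\lambda(U)^2\lambda(S_2)=\tilde{d}U(\tilde{d}UdS_2)=\tilde{d}UdS_1$ by \eqref{Eq-ext-2}, and then $\tilde{d}UdS_1=dS_2\tilde{d}U=\lambda(S_2)\lambda(U)$ by \eqref{Eq-ext-1}. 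So the two equations interlock to yield exactly this last relation, producing a well-defined endomorphism $\lambda$ extending $\lambda_d$. The final assertions then follow at once: by Remark~\ref{extension-auto-D2} any such extension is automatically an automorphism, so $d\in\widetilde{\U}(\D_2)$ and $\lambda=\widetilde{\lambda_d}$; and $\tilde{d}=\check{d}$ is immediate from the definition $\widetilde{\lambda_d}(U)=\check{d}U$.

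The only mild obstacle I anticipate is the forward-direction identification $\lambda(U)=\tilde{d}U$, which needs both that $\lambda$ acts trivially on $\D_2$ \emph{and} that $\textrm{Ad}(U)$ preserves $\D_2$, together with the MASA property. Past this, the proof is pleasant bookkeeping in which the two displayed equations are exactly the remaining obstructions to preserving the characteristic relation $S_2U=U^2S_2$ of $\Q_2$.
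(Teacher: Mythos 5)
Your proof is correct and takes essentially the same route as the paper: in the forward direction you apply the extension to the defining relations of $\Q_2$ (after using the MASA property and $\Ad(U)$-invariance of $\D_2$ to write $\lambda(U)=\tilde{d}U$), and in the converse you invoke universality of $\Q_2$ for the pair $(\tilde{d}U,\,dS_2)$, concluding the automorphism property and $\tilde{d}=\check{d}$ via Remark \ref{extension-auto-D2}, exactly as the paper does. You merely spell out in more detail what the paper leaves as "easily verified" (the identification $\lambda(U)=\tilde{d}U$ for a mere endomorphism and the check of the defining relations), which is fine.
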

\begin{proof}
If $\lambda_d$ extends, then the two equalities in the statement are easily verified with $\tilde{d} = \check{d}$ if one applies its extension $\widetilde{\lambda_d}$ (cf. Remark \ref{extension-auto-D2}) to $US_1=S_2U$ and $US_2=S_1$  respectively
also taking into account that $\widetilde{\lambda_d}(U)=\check{d}U$. The converse is dealt with analogously by noting that the pair ($\tilde{d}U, dS_2)$ in $\Q_2$ still satisfies the defining relations
of $\Q_2$ and therefore, by universality, there exists an endomorphism $\lambda$ of $\Q_2$ such that $\lambda(S_2) = dS_2$ and $\lambda(U) = \tilde{d}U$. But then,
$\lambda(S_1) = \lambda(US_2) = \tilde{d}UdS_2 = dS_1$ by Eq. (\ref{Eq-ext-2}), so that $\lambda$ extends $\lambda_d$. 
\end{proof}
At this point, the reader may be wondering whether it ever happens that $d=\check{d}$. In fact, it turns out that this is never the case unless $d=1$, namely we have the following result. 
\begin{proposition}\label{fixed-point-check}
The unitary $d=1$ is the unique fixed point of the map $\widetilde\U(\D_2)\ni d\mapsto\check{d}\in \U(\D_2)$.
\end{proposition}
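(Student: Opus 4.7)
Suppose $d\in\widetilde{\U}(\D_2)$ satisfies $d=\check{d}$. The natural strategy is to feed this equality into Lemma \ref{unirel} (which, by the moreover clause, forces $\tilde d=\check d=d$) and then read the two resulting identities in the spectral picture $\D_2\cong C(K)$, $K=\{1,2\}^{\mathbb{N}}$.

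With $\tilde d = d$, the two equations of Lemma \ref{unirel}, after cancelling the leftmost $d$ by unitarity, become
\[
U d S_1 = S_2 d U, \qquad U d S_2 = S_1.
\]
The second, read through $S_1=US_2$, collapses immediately to $dS_2=S_2$, i.e., $dP_2=P_2$; on the Cantor set this is the normalization $d\equiv 1$ on the cylinder $[2]$. For the first, I would use $S_2U=US_1$ to rewrite $S_2dU=US_1(U^*dU)$, cancel the leading $U$, and then multiply on the left by $S_1^*$; setting $d':=U^*dU\in\D_2$ and using $S_1=US_2$ once more, this reduces to
\[
S_2^*\, d'\, S_2 = d'.
\]

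In the $C(K)$ picture, the map $e\mapsto S_2^*eS_2$ is precomposition with the prepend-$2$ map $x\mapsto 2x$ (the dual statement to $\varphi$ acting as the shift pullback), so the identity above reads $d'(2x)=d'(x)$ for every $x\in K$. Iterating gives $d'(x)=d'(2^n x)$ for all $n\ge 1$, and since $2^n x\to 2^{\infty}$ in the product topology, continuity of $d'$ forces $d'$ to be the constant $d'(2^{\infty})\in\mathbb{T}$. Consequently $d=Ud'U^*$ is itself a scalar $c\in\mathbb{T}$, and combining with $d\equiv 1$ on $[2]$ gives $c=1$, hence $d=1$.

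I do not expect a serious obstacle here: once the two identities of Lemma \ref{unirel} are simplified, the heart of the argument is the short continuity/iteration step on the Cantor set, which is essentially an exercise about the topology of $\{1,2\}^{\mathbb{N}}$ and does not require the full ergodic machinery of the odometer. The only point demanding a little care is verifying that $S_2^*(\cdot)S_2$ really corresponds to the prepend-$2$ map on $C(K)$, but this is standard and can be checked on the generating projections $P_\alpha$.
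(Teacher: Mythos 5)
Your proof is correct, and it starts exactly where the paper does: plug $\tilde d=\check d=d$ into the two relations of Lemma \ref{unirel}, cancel the leftmost $d$, and use $S_1=US_2$, $S_2U=US_1$ to get $dS_2=S_2$ and the recursion coming from $UdS_1=S_2dU$. The only divergence is in how the recursion is exploited. The paper stays in the canonical representation on $\ell_2(\mathbb{Z})$ and reads the relations entrywise, obtaining $d(2k)=1$ and $d(2k+1)=d(k+1)$; this determines every entry except $d(1)$, which then has to be pinned down by a separate remark that an exceptional value at the single point $1$ would be incompatible with $d\in\D_2$. You instead pass to the Gelfand picture $\D_2\cong C(K)$, rewrite the first relation as $S_2^*d'S_2=d'$ with $d'=U^*dU\in\D_2$ (your identification of $S_2^*(\cdot)S_2$ with pullback along $x\mapsto 2x$ is correct and easily checked on the $P_\alpha$), and conclude constancy of $d'$ from $d'(x)=d'(2^nx)\to d'(2^\infty)$ by continuity; together with $d\equiv 1$ on the cylinder $[2]$ this gives $d=1$. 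The two arguments encode the same recursion ($d'(2k)=d'(k)$ is exactly $d(2k+1)=d(k+1)$), but your spectrum-level formulation uses continuity uniformly at all points of $K$ and so absorbs the paper's special handling of the undetermined entry $d(1)$; the paper's version, in exchange, is more explicitly computational in the representation where $\check d$ was defined.
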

\begin{proof}
If we work in the canonical representation, we simply need to show that $d(k)=1$ for every $k\in\mathbb{Z}$. We first handle the even entries.
Formula \eqref{Eq-ext-2} becomes $dUdS_2=dS_1$, which in turn gives 
\begin{align*}
dS_2 & =U^*S_1=S_2\; .
\end{align*} 
Now by computing the above equality on the vectors of the canonical basis of $\ell_2(\mathbb{Z})$ we get  $d(2k)=1$ for all $k\in \mathbb{Z}$.  As for the odd entries, Formula \eqref{Eq-ext-1} leads to $dUdS_1=dS_2dU$, which yields $d(2k+1)=d(k+1)$ for all $k\in\mathbb{Z}$. 
This in turn says all odd entries of $d$ are $1$ as well, apart from $d(1)$, which is in fact not determined by this condition. However, it cannot be different from $1$, for otherwise $d$ would not even belong to $\D_2$.
\end{proof}
Although more focused on the Cuntz algebra $\O_2$, the next useful result is included all the same. In fact, we do believe that it may shed some light on 
applications yet to come. Recall that if $d, d'\in\U(\D_2)$, then $\Ad(d')\circ \lambda_d=\lambda_{d'd\varphi(d')^*}$. In particular, taking $d'=d^*$ we get that $\lambda_d$ is extendible if and only if $\lambda_{\varphi(d)}$ is extendible.
\begin{proposition}\label{udustar}
Let $\lambda_d\in\Aut(\O_2)$ be an extendible automorphism. Then either $\lambda_d$ is a gauge automorphism or $\lambda_{dUd^*U^*}$ is outer.
\end{proposition}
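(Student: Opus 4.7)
The plan is to prove the contrapositive: assuming $\lambda_d$ is extendible and that $\lambda_{dUd^*U^*}$ is inner, I will conclude $\lambda_d$ is a gauge automorphism. The first move is to turn the inner hypothesis into a unitary relation. Since $\lambda_{dUd^*U^*}\in\Aut_{\D_2}(\O_2)$ and $\D_2$ is MASA in $\O_2$, any implementing unitary must lie in $\U(\D_2)$; writing it as $v$, the Cuntz--Takesaki correspondence $\Ad(v)=\lambda_{v\varphi(v)^*}$ yields
\[
dUd^*U^* \;=\; v\varphi(v)^*.
\]

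Next, I extract entrywise information about $\check d$ from Lemma \ref{unirel}. Multiplying $\check dUdS_2=dS_1=dUS_2$ on the right by $S_2^*$, and using $\sigma(P_2)=P_1$, gives $\check d\,\sigma(d)P_1=dP_1$, that is $\check dP_1=(dUd^*U^*)P_1$; the companion equation produces $\check dP_2=d\,\varphi(\check d)\,\sigma(d)^*P_2$. In the canonical representation on $\ell_2(\mathbb{Z})$, where $\varphi(\check d)(2k)=\check d(k)$ and $\sigma(x)(k)=x(k-1)$, these combine with the entry form $d(k)\overline{d(k-1)}=v(k)\overline{v(\lfloor k/2\rfloor)}$ of the innerness identity to give the two recursions
\[
\check d(2k+1)=v(2k+1)\,\overline{v(k)},\qquad \check d(2k)=v(2k)\,\overline{v(k)}\,\check d(k),\qquad k\in\mathbb{Z}.
\]
A one-line induction telescopes the second into
\[
\check d\bigl(2^a(2m+1)\bigr) \;=\; v\bigl(2^a(2m+1)\bigr)\,\overline{v(m)},\qquad a\geq 0,\; m\in\mathbb{Z}.
\]

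The decisive step is then to invoke $2$-adic continuity. Under the identification $\D_2\cong C(\mathbb{Z}_2)$, both $v$ and $\check d$ are continuous on the Cantor set $\mathbb{Z}_2$ and their canonical entries are their values on the dense subset $\mathbb{Z}\subset\mathbb{Z}_2$. Since $|2^a(2m+1)|_2=2^{-a}\to 0$, for every fixed $m$ the integer $2^a(2m+1)$ converges to $0$ in $\mathbb{Z}_2$, and passing to the limit in the telescoped formula forces $\check d(0)=v(0)\,\overline{v(m)}$ for every $m\in\mathbb{Z}$. Setting $m=0$ gives $\check d(0)=1$; substituting back gives $v(m)=v(0)$ for every $m\in\mathbb{Z}$, and density yields $v\in\mathbb{T}\cdot 1$. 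Consequently $\Ad(v)=\mathrm{id}$, so $dUd^*U^*=1$, i.e.\ $d=\sigma(d)$; since the odometer has a dense orbit on $\mathbb{Z}_2$, any continuous $\sigma$-invariant function is constant, so $d\in\mathbb{T}\cdot 1$ and $\lambda_d$ is a gauge automorphism. The main obstacle I anticipate is precisely the transition from the algebraic recursions to a genuine constraint on $v$: the telescoped identity is by itself merely a parametrization of admissible triples $(d,v,\check d)$, and the point where things actually collapse is the continuous extension to $0\in\mathbb{Z}_2$, which is forced by the $2$-adic structure hardwired into $\D_2$. Once $v$ is known to be scalar, everything else is the short ergodic argument for the uniquely ergodic odometer.
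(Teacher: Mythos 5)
Your proof is correct, but it takes a genuinely different route from the paper's. Both arguments extract the same two consequences of Lemma \ref{unirel}: your relations $\check{d}P_1=dUd^*U^*P_1$ and $\check{d}P_2=d\,\varphi(\check{d})\,(UdU^*)^*P_2$ are precisely the paper's formulas \eqref{check-d-S1}--\eqref{check-d-S2}. From there the paper notes that $\Ad(\check{d}^*)\circ\lambda_{dUd^*U^*}$ fixes $S_2$ and invokes the Matsumoto--Tomiyama dichotomy (a non-trivial automorphism of $\O_2$ fixing a generating isometry is outer); in the identity branch it gets $\varphi(\check{d})=\widecheck{\varphi(d)}$, hence $\check{d}=1$, and concludes by Proposition \ref{ker-check}. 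You instead prove the contrapositive with no outerness theorem at all: innerness together with the MASA property places the implementing unitary $v$ in $\U(\D_2)$, the Cuntz--Takesaki formula and injectivity of $u\mapsto\lambda_u$ give $dUd^*U^*=v\varphi(v)^*$, and the telescoped entrywise recursion plus $2$-adic continuity (the entries $d(k)$, $k\in\mathbb{Z}$, are the values of the Gelfand transform on the dense copy of $\mathbb{Z}$ in the spectrum $\cong\mathbb{Z}_2$ of $\D_2$ --- a fact the paper itself uses implicitly, e.g.\ in the remark showing $d(k2^n)\to d(0)$) force $\check{d}(0)=1$ and $v$ scalar, whence $dUd^*U^*=1$; invariance of $d$ under the minimal odometer then makes $d$ scalar, so $\lambda_d$ is gauge. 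What each approach buys: the paper's proof is shorter and stays representation-free modulo the cited outerness result, while yours is self-contained, runs entirely in the canonical representation, and delivers the explicit by-products $\check{d}(0)=1$ and $d=UdU^*$ along the way. The only points you should state explicitly (both routine) are the injectivity of $u\mapsto\lambda_u$, needed to pass from $\Ad(v)=\lambda_{dUd^*U^*}$ to the identity $dUd^*U^*=v\varphi(v)^*$, and the standard identification of the spectrum of $\D_2$ with $\mathbb{Z}_2$ under which the diagonal entries are the values at integer points and $2^a(2m+1)\to 0$.
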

\begin{proof}
First of all we prove that
\begin{align}\label{formula-check-phi-AAA}
	\widecheck{\varphi(d)}S_1=S_1\; .
\end{align}
To this aim, rewrite Formula \eqref{Eq-ext-2} as $\check{d}UdS_2 =  \check{d}UdU^*S_1   = d S_1$. Then, by using the identity $\widecheck{d\varphi(d)^*}=d Ud^*U^*$ and the multiplicativity of  the
map  $\,\check{}\; $ one finds
\begin{align}
& \check{d}S_1  = d Ud^*U^*S_1 = \widecheck{d\varphi(d)^*}S_1  = \check{d}\widecheck{\varphi(d)^*}S_1 \label{check-d-S1}
\end{align}
whence the claim. Likewise, Formula \eqref{Eq-ext-1} yields $\check{d}UdS_1  = \check{d}UdU^*S_2U  =d S_2 \check{d} U=d \varphi(\check{d})S_2  U$ and thus $\check{d}UdU^*S_2  = d \varphi(\check{d})S_2$, that is
\begin{align}
& \check{d} d^* UdU^*\varphi(\check{d}^*)S_2 = S_2\; . \label{check-d-S2}
\end{align}

The former equality actually shows that the automorphism $\Ad(\check{d}^*)\circ \lambda_{dUd^*U^*}$ fixes $S_2$, i.e. $\Ad(\check{d}^*)\circ \lambda_{dUd^*U^*}[S_2]=S_2$. By  \cite[Corollary B]{Matsumoto}, then either $\Ad(\check{d}^*)\circ \lambda_{dUd^*U^*}$ is the identity or $\Ad(\check{d}^*)\circ \lambda_{dUd^*U^*}$ is an outer automorphism of $\O_2$. 
In the first case $\check{d}\varphi(\check{d})^* = dUd^*U^*= \widecheck{d\varphi(d)^*}= 
\check{d}\widecheck{\varphi(d)}^*$, hence
\begin{align*}
& \varphi(\check{d}) = \widecheck{\varphi(d)} \ . 
\end{align*}
By applying the last equality to \eqref{formula-check-phi-AAA} we get 
\begin{align*}
& S_1=\widecheck{\varphi(d)}S_1=\varphi(\check{d})S_1=S_1\check{d}
\end{align*}
which proves that $\check{d}=1$, thus $\lambda_d$ is a gauge automorphism by Proposition \ref{ker-check}. In the second case, clearly $\lambda_{dUd^*U^*}$ is an outer automorphism of $\O_2$.
\end{proof}
At any rate, a first application can be given at once. 
\begin{corollary}
If $\lambda_d$, $d \in \U(\D_2)$ is a non-trivial inner automorphism of $\O_2$, then $\lambda_{UdU^*}$ is outer.
\end{corollary}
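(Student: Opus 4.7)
The plan is to reduce the statement to Proposition~\ref{udustar} via a small algebraic manipulation inside the abelian group $\U(\D_2)$.

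First I would note that the hypothesis $\lambda_d\in\inn(\O_2)$ makes $\lambda_d$ automatically extendible to $\Q_2$, since the inner automorphism of $\O_2$ implementing $\lambda_d$ is in particular an inner automorphism of $\Q_2$. Applying Proposition~\ref{udustar}, either $\lambda_d$ is a gauge automorphism or $\lambda_{dUd^*U^*}$ is outer in $\O_2$. The first alternative is incompatible with the assumption of non-triviality: by maximality of $\D_2\subset\O_2$ one can write $\lambda_d=\Ad(v)$ with $v\in\U(\D_2)$, and if moreover $\lambda_d=\alpha_\theta$ for some $\theta\notin 2\pi\Relativi$, then both $\widetilde{\alpha_\theta}$ and $\Ad(v)$ would be elements of $\Aut_{\D_2}(\Q_2)$ extending $\lambda_d$; by the uniqueness of extension (the Corollary in Section~3) they must coincide, forcing $\widetilde{\alpha_\theta}\in\inn(\Q_2)$ and contradicting the outerness result from \cite{ACR}. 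Thus $\lambda_{dUd^*U^*}$ is outer.

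Next I would exploit the elementary identity
$$UdU^* \;=\; d\cdot (dUd^*U^*)^*,$$
valid in the abelian group $\U(\D_2)$. Pushing it through the group homomorphism $e\mapsto \lambda_e$ into the abelian group $\Aut_{\D_2}(\O_2)$ yields
$$\lambda_{UdU^*} \;=\; \lambda_d\circ \lambda_{dUd^*U^*}^{-1}.$$
The first factor is inner by hypothesis and the second is outer by the previous step, so the composition is outer: otherwise $\lambda_{dUd^*U^*}=\lambda_d\circ \lambda_{UdU^*}^{-1}$ would be a composition of two inner automorphisms, hence inner, contradicting what was just established.

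The only non-formal ingredient is the invocation of \cite{ACR} to exclude the gauge case in the first step; the remainder of the argument is essentially book-keeping once Proposition~\ref{udustar} is available, so I do not foresee any real obstacle.
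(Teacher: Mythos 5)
Your argument is correct and is essentially the derivation the paper intends, since the corollary is stated as an immediate application of Proposition~\ref{udustar}: innerness gives extendibility, the gauge alternative is excluded by non-triviality (your detour through $\Q_2$ and the outerness of $\widetilde{\alpha_\theta}$ from \cite{ACR} is a valid way to do this), and the identity $\lambda_{dUd^*U^*}=\lambda_d\circ\lambda_{UdU^*}^{-1}$ in the abelian group $\Aut_{\D_2}(\O_2)$ transfers outerness to $\lambda_{UdU^*}$. No gaps.
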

We can now resume to our general discussion. With this in mind, we start by spotting a useful necessary condition on $d$ for the corresponding $\lambda_d$ to extend. 
\begin{proposition}\label{pointspectrum}
Let $d$ be a unitary in $\D_2$. If $\lambda_d$ extends to $\Q_2$, then $d(0)=d(-1)$.
\end{proposition}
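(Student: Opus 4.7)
The plan is to invoke Lemma \ref{unirel}: if $\lambda_d$ extends, there is a unitary $\tilde d = \check d \in \U(\D_2)$ satisfying equations \eqref{Eq-ext-1} and \eqref{Eq-ext-2}. I would work in the canonical representation on $\ell_2(\mathbb{Z})$, where $\D_2$ acts diagonally and $d(k) = (e_k, d e_k)$, and extract the identity $d(0) = d(-1)$ by evaluating one of these two operator equations on a single, carefully chosen basis vector.

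The key observation is that the isometry $S_1 = US_2$ acts as $S_1 e_k = e_{2k+1}$, so the map $k \mapsto 2k+1$ on $\mathbb{Z}$ has the unique fixed point $k = -1$, i.e. $S_1 e_{-1} = e_{-1}$. This makes $e_{-1}$ the natural test vector on which both sides of \eqref{Eq-ext-1} will be forced to land on the single vector $e_0 = U e_{-1} = S_2 e_0$, turning an operator identity into a scalar identity involving only the entries $d(-1), d(0), \tilde d(0)$.

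Concretely, I would apply \eqref{Eq-ext-1}, namely $\tilde d U d S_1 = d S_2 \tilde d U$, to $e_{-1}$. The left-hand side becomes
\[
\tilde d U d S_1 e_{-1} = \tilde d U d e_{-1} = d(-1)\, \tilde d U e_{-1} = d(-1)\, \tilde d e_0 = d(-1)\tilde d(0)\, e_0 ,
\]
and the right-hand side becomes
\[
d S_2 \tilde d U e_{-1} = d S_2 \tilde d e_0 = \tilde d(0)\, d S_2 e_0 = \tilde d(0)\, d e_0 = \tilde d(0) d(0)\, e_0 .
\]
Comparing and cancelling the scalar $\tilde d(0)$ (nonzero, since $|\tilde d(0)|=1$) gives the claim.

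There is really no serious obstacle here; the whole content of the proof is recognising that $-1$ is the fixed point of $S_1$ in the canonical representation, so that \eqref{Eq-ext-1} specialises at $e_{-1}$ to an identity involving only the diagonal entries of $d$ at the indices $0$ and $-1$. Equation \eqref{Eq-ext-2} could in principle be used as well, but since $S_2 e_k = e_{2k}$ has no nonzero fixed point it does not produce such a clean rigidity at a single site.
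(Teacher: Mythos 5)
Your proof is correct, but it is packaged differently from the paper's. The paper does not evaluate the relations of Lemma \ref{unirel} on a basis vector; instead it observes that extendability makes the isometries $dS_1=\lambda_d(S_1)$ and $dS_2=\lambda_d(S_2)$ unitarily equivalent in $\Q_2$ (they are conjugated by $\widetilde{\lambda_d}(U)$, since $U^*S_2U=S_1$), so their point spectra in the canonical representation must coincide, and then checks $\sigma_p(dS_1)=\{d(-1)\}$, $\sigma_p(dS_2)=\{d(0)\}$. The underlying mechanism is the same in both arguments --- $e_{-1}$ is the unique eigenvector of $dS_1$ (fixed point of $k\mapsto 2k+1$) and $e_0$ the unique eigenvector of $dS_2$, and $U$ exchanges them --- but your version shortcuts the spectral discussion: you specialise \eqref{Eq-ext-1} at $e_{-1}$ and read off $d(-1)\,\tilde d(0)=\tilde d(0)\,d(0)$, cancelling the unimodular scalar $\tilde d(0)$. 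This is a perfectly valid one-line computation (note only that Lemma \ref{unirel} covers extension to an endomorphism, which is all you need, and that elements of $\D_2$ are indeed diagonal in the canonical representation, so $\tilde d\,e_0=\tilde d(0)e_0$). What the paper's formulation buys is the explicit identification of the obstruction --- coincidence of the point spectra of the two intertwined isometries --- which is the conceptual reason the condition $d(0)=d(-1)$ appears; what yours buys is brevity and the fact that it uses nothing beyond the universal relations already recorded in Lemma \ref{unirel}.
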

\begin{proof}
Owing to the extendability of $\lambda_d$ the isometries $dS_1=\lambda_d(S_1)$ and $dS_2=\lambda_d(S_2)$ are still intertwined, i.e. they are unitarily equivalent. In particular, their point spectra must coincide. 
Now the equalities $\sigma_p(dS_1)=\{d(-1)\}$ and $\sigma_p(dS_2)=\{d(0)\}$ are both easily checked, hence the thesis follows. 
\end{proof}
It goes without saying that the condition is only necessary. Even so, it does have the merit of highlighting a property of which we will have to make an extensive use. Therefore, unless otherwise stated, our unitaries $d\in\U(\D_2)$ will always satisfy 
the condition $$d(0)=d(-1) \ . $$ 
In addition, there 
is no lack of generality if we further assume that both $d(0)$ and $d(-1)$ equal $1$.
For if this were not the case, we could always multiply $\lambda_d$ by a suitable gauge automorphism, which of course would not affect the extendability of $\lambda_d$, since gauge automorphisms  certainly extend to $\Q_2$.
Finally, we will work in the canonical representation and
adopt the Dirac bra-ket notation for rank-one operators: for any given $u,v\in\ell_2(\mathbb{Z})$ the operator $w\rightarrow(v,w)u$ is  denoted by $|u \rangle \langle v |$.
This said, we can now state a result which sheds further light on the relation between $d$ and $\check{d}$ by relating our setup to some findings in \cite{LarsenLi}.

\begin{theorem}
Let $d$ be a unitary in $\D_2$ such that $d(0)=d(-1)=1$; then
\begin{enumerate}
\item there exists the strong limit $d_\infty$ of the sequence $d_k  \doteq d\varphi(d)\varphi^2(d)\ldots\varphi^{k-1}(d)$ in the canonical representation;
\item $\lambda_d$ is weakly inner in the canonical representation restricted to $\O_2$; 
\item $\lambda_d$ extends to a representation of $\Q_2$ on $\ell_2(\mathbb Z)$; 
\item the automorphism $\lambda_d\in\Aut(\O_2)$ extends to an automorphism of $\Q_2$
if and only if,
for a unique $\alpha\in \mathbb{T}$, the strong limit of the sequence $$x_k\doteq 
\alpha |e_0 \rangle \langle e_0 |+d\varphi(d)\varphi^2(d)\ldots\varphi^{k-1}(d)\left(\sum_{i=0}^{k-1}S_2^iS_1S_1^*(S_2^*)^i \right)U\varphi^{k-1}(d^*)\ldots\varphi(d^*)d^*U^*$$
belongs to $\D_2$, in which case 
the limit coincides with $\check{d}$ and $\alpha = \check{d}(0)$;
\item if $\lambda_d$ extends to an automorphism $\widetilde{\lambda_d}$, 
then $\widetilde{\lambda_d}$ is weakly inner in the canonical representation if ($\lambda_d$ is inner or)
$\check{d}(0)=1$.
\end{enumerate}
\end{theorem}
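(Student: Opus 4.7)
The plan is to work throughout in the canonical representation $\pi:\Q_2\to B(\ell_2(\mathbb{Z}))$, in which $\varphi^k(d)$ acts as the diagonal operator with $n$-th entry $d(\lfloor n/2^k\rfloor)$. Because $d(0)=d(-1)=1$, for each fixed $n$ one has $\varphi^k(d)e_n=e_n$ as soon as $2^k$ exceeds $|n|$; writing $d_{k+1}=d_k\varphi^k(d)$, the vector $d_k e_n$ then stabilizes, and uniform boundedness upgrades the pointwise convergence to the SOT limit $d_\infty$ of item 1. Iterating the intertwining $\varphi^j(d)S_i=S_i\varphi^{j-1}(d)$ gives $d_k S_i=dS_i\,d_{k-1}$; taking SOT limits yields $d_\infty S_i d_\infty^*=dS_i=\lambda_d(S_i)$, so $\Ad(d_\infty)$ agrees with $\lambda_d$ on the generators and hence on all of $\O_2$. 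As $d_\infty\in\pi(\D_2)''\subseteq\pi(\O_2)''$ this proves item 2; setting $\rho\doteq\Ad(d_\infty)\circ\pi$ then produces a $*$-representation of $\Q_2$ on $\ell_2(\mathbb{Z})$ whose restriction to $\O_2$ is $\pi\circ\lambda_d$, giving item 3.

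For item 4 my idea is to exploit Lemma \ref{unirel} and decode \eqref{Eq-ext-1}--\eqref{Eq-ext-2} entrywise. Testing \eqref{Eq-ext-2} against $e_j$ forces the odd entries $\tilde d(2j+1)=d(2j+1)\overline{d(2j)}$, while \eqref{Eq-ext-1} yields the recursion $\tilde d(2k)=\tilde d(k)\,d(2k)\overline{d(2k-1)}$ for $k\neq 0$; the value $\alpha\doteq\tilde d(0)\in\mathbb{T}$ remains free, since the $k=0$ case degenerates thanks to $d(0)=d(-1)=1$. A short induction on the $2$-adic valuation of $n$ then gives the closed form $\tilde d(n)=d_\infty(n)\overline{d_\infty(n-1)}$ for every $n\neq 0$. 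Using $\sum_{i=0}^{k-1}S_2^iS_1S_1^*S_2^{*i}=1-S_2^kS_2^{*k}$ one simplifies $x_k$ to a diagonal operator equal to $\alpha$ at $n=0$, to zero at the remaining multiples of $2^k$, and to $d_k(n)\overline{d_k(n-1)}$ elsewhere; since every fixed nonzero $n$ eventually leaves $2^k\mathbb{Z}$, the pointwise (hence SOT) limit of $x_k$ is exactly the candidate $\tilde d$. Consequently $\lambda_d$ extends if and only if this candidate lies in $\pi(\D_2)$ for some $\alpha\in\mathbb{T}$, in which case it coincides with $\check d$ and $\alpha=\check d(0)$; uniqueness of $\alpha$ follows from the uniqueness of the extension $\widetilde{\lambda_d}$ established in the previous section.

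Item 5 then splits into two cases. If $\lambda_d=\Ad(v)$ with $v\in\U(\O_2)$, then $\Ad(v)$ is already an automorphism of $\Q_2$ restricting to $\lambda_d$, so by the rigidity statement of \cite{ACR} it coincides with $\widetilde{\lambda_d}$; thus $\widetilde{\lambda_d}$ is inner in $\Q_2$ and a fortiori weakly inner in any representation. If instead $\check d(0)=1$, the identification from item 4 now extends to the index $0$ as well, because $d_\infty(0)\overline{d_\infty(-1)}=1\cdot 1=1$; hence $\check d=d_\infty Ud_\infty^*U^*$ as operators, so $\widetilde{\lambda_d}(U)=\check d\,U=d_\infty Ud_\infty^*$, and combined with item 2 this gives $\pi\circ\widetilde{\lambda_d}=\Ad(d_\infty)\circ\pi$ on all of $\Q_2$, which is weakly inner since $d_\infty\in\pi(\Q_2)''$. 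The delicate point throughout is the ``if'' direction of item 4: the entrywise analysis pins down the candidate operator unambiguously, but deciding whether it actually lies in the strict subalgebra $\pi(\D_2)\subsetneq\ell_\infty(\mathbb{Z})$ is nothing other than the extendability problem in disguise, which cannot be resolved by this bookkeeping alone and is precisely the obstruction flagged in the introduction.
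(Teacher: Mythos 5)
Your proposal is correct, and items 1--3 and 5 follow essentially the same lines as the paper (eventual stabilization of the entries $d_k(j)$, the intertwining computation $d_\infty S_id_\infty^*=dS_i$, and $\check{d}(0)=1$ forcing $\check{d}=d_\infty Ud_\infty^*U^*$ so that $\widetilde{\lambda_d}$ is implemented by $d_\infty$). Where you genuinely diverge is item 4. The paper, for the ``if'' direction, verifies directly that the pair $(\delta U, dS_2)$ with $\delta=(\alpha-1)|e_0\rangle\langle e_0|+d_\infty Ud_\infty^*U^*$ satisfies the defining relations of $\Q_2$ (a two-line operator computation after simplifying $x_k=(\alpha-1)|e_0\rangle\langle e_0|+Q_kd_kUd_k^*U^*$), and for the ``only if'' direction it appeals to the uniqueness statement in \cite[Remark 4.2]{LarsenLi} applied to $\check{d}U$. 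You instead decode Equations \eqref{Eq-ext-1}--\eqref{Eq-ext-2} entrywise, obtaining $\tilde{d}(2j+1)=d(2j+1)\overline{d(2j)}$ and the recursion $\tilde{d}(2k)=\tilde{d}(k)d(2k)\overline{d(2k-1)}$ with $\tilde{d}(0)$ free, whose closed form $\tilde{d}(n)=d_\infty(n)\overline{d_\infty(n-1)}$ ($n\neq0$) identifies the strong limit of $x_k$ with the unique possible solution $\delta$; both directions then come straight from Lemma \ref{unirel}, and your only-if argument bypasses the citation of \cite{LarsenLi} entirely, which is a mild simplification. Two small points you should make explicit for completeness: (i) the operator identities \eqref{Eq-ext-1}--\eqref{Eq-ext-2} are \emph{equivalent} to your entrywise relations because both sides send each $e_k$ to a multiple of one fixed basis vector, and this equivalence is what lets the ``if'' direction run (for any $\alpha\in\mathbb{T}$ the $k=0$ relation is vacuous, so $\delta\in\U(\D_2)$ really is an admissible $\tilde{d}$); (ii) since Lemma \ref{unirel} is a statement inside $\Q_2$ while you verify the relations as operators on $\ell_2(\mathbb{Z})$, one quietly uses faithfulness of the canonical representation (simplicity of $\Q_2$) to pull the identities back --- the paper makes the same implicit use, so this is not a gap but should be acknowledged. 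Your uniqueness of $\alpha$ via uniqueness of the extension (or, even more directly, via the fact that $|e_0\rangle\langle e_0|\notin\D_2$) is fine.
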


\begin{proof}
To begin with, we note that for every $j\in\mathbb{Z}$ the sequence $\{d_k(j): k\in\mathbb{N}\}\subset\mathbb{T}$ is eventually constant, as we certainly
have  $d_k(j)=d_{|j|}(j)$ for all $k\geq |j|$ thanks to $\varphi(d)(k)=d([k/2])$, $k\in\mathbb{Z}$, and $d(0)=d(-1)=1$. In particular, this shows that $\{d_k:k\in\mathbb{N}\}$ is
strongly convergent to a unitary $d_\infty\in\ell_\infty(\mathbb{Z})$.
Actually, the operator $d_\infty$ thus exhibited
implements $\lambda_d$, i.e.
$d S_i=d_\infty S_i d_\infty^*$, $i=1,2$.
Indeed, 
\begin{equation}\label{weaklyinn}
d_\infty S_i d_\infty^* = \lim_k d_k S_i d_k^* = \lim_k d_k \varphi(d_k)^* S_i \\
= \lim_k d \varphi^k(d^*) S_i = dS_i \ , 
\end{equation}
where we have used the equalities $S_i x = \varphi(x) S_i$, for all $x \in \O_2$ and $i=1,2$ and that $\varphi^k(d)$ strongly converges to 1.
Because $\Ad(d_\infty)$ restricts to $\O_2$ as $\lambda_d$, it also restricts to $\Q_2$ if only as a representation of the latter algebra on the Hilbert
space $\ell_2(\mathbb{Z})$.\\
To ease some of the computations we need to make, it is now particularly convenient to introduce 
the sequence of projections $Q_k\doteq  |e_0 \rangle \langle e_0 |+\sum_{i=0}^{k-1}S_2^iS_1S_1^*(S_2^*)^i$, which act on $\ell_2(\mathbb{Z})$.
It is then not difficult to see that now the $x_k$'s in the statement take on the much simpler form $x_k = (\alpha -1)|e_0 \rangle \langle e_0 |+Q_kd_kUd_k^*U^*$.
Now $Q_k$ strongly converges to the identity $I$, as shown by straightforward  computations. Furthermore, the sequence $Ud_k^*U^*$ converges too, since $d_k$ does.
This shows that $x_k$ 
is strongly convergent 
to a limit  $\delta\doteq (\alpha-1) |e_0 \rangle \langle e_0 | + d_\infty U d_\infty^* U^*$. Note that $\delta$ is a unitary lying in
$\ell_\infty(\mathbb{Z})=\D_2''$, for the equality see \cite[Sect. 2]{ACR}.
We also point out the
equality $d_k=d\varphi(d_{k-1})$, $k\in \{2, 3, \ldots \} \cup \{\infty\}$, 
which is necessary to carry out some of the following computations.\\
We can now deal with 4. We start with the if part. In view of Lemma \ref{unirel},
it is enough to show that $\tilde{S_2}\tilde{U}\doteq\tilde{U}^2\tilde{S_2}$ and $\tilde{S_1}\doteq\tilde{U}\tilde{S_2}$, where $\tilde{U}=\delta U \in \Q_2$ and $\tilde{S_i}=dS_i$.
As for the first equality, we rewrite $\delta U$ as
$$
\delta U= (\alpha-1) |e_0 \rangle \langle e_{-1} |+ d_\infty U d_\infty^*
$$
where 
$|e_0 \rangle \langle e_{-1} |(v)
= v(-1) e_0 $ for all $v\in\ell_2(\mathbb{Z})$.
If we now use the expression obtained above, we can compute $\tilde{U}^2\tilde{S_2}$ as 
$$
\delta U \delta U d S_2 = \Big((\alpha-1) |e_0 \rangle \langle e_{-1} |+ d_\infty U d_\infty^*\Big)^2 dS_2 = (\alpha-1) |e_0 \rangle \langle e_{-1} | + d_\infty U^2 d_\infty^* d S_2
$$
while
$$
dS_2 \delta U = dS_2 \Big((\alpha-1) |e_0 \rangle \langle e_{-1} |+ d_\infty U d_\infty^* \Big) = (\alpha-1) |e_0 \rangle \langle e_{-1} | + dS_2 d_\infty U d_\infty^* \ .
$$
It remains to show that $d_\infty U^2 d_\infty^* d S_2 = dS_2 d_\infty U d_\infty^*$. However, these two terms are equal to $d_\infty U^2 S_2 d_\infty^*$ and $d_\infty S_2 U d_\infty^*$, respectively,
where we used equation (\ref{weaklyinn}).
The second equality is dealt with by means of a still easier computation: 
$$\delta U d S_2 =\Big((\alpha-1) |e_0 \rangle \langle e_{-1} |+ d_\infty U d_\infty^*\Big)dS_2 = d_\infty U d_\infty^* d S_2 = d_\infty S_1 d_\infty^* = d S_1\ . $$
It is now clear that $\check{d} = (\alpha-1) |e_0 \rangle \langle e_0 | + d_\infty U d_\infty^* U^* \in \Q_2$ and it follows at once that $\check{d}(0)=\alpha$.\\
For the only if part, if we think of $\lambda_d$ as a representation of $\O_2$ on $\ell_2(\mathbb{Z})$, the conclusion follows by the uniqueness pointed out in \cite[Remark 4.2]{LarsenLi} (applied to $\check{d} U$).\\
Finally, the condition $\check{d}(1)=1$ leads to the equality $\check{d}=d_\infty U d_\infty^*U^*$, whence $\widetilde{\lambda_d}(U)=\check{d}U=d_\infty U d_\infty^*=\Ad(d_\infty)(U)$, which
says $\widetilde{\lambda_d}$ is implemented by $d_\infty$.
\end{proof}

Needless to say, if $\lambda_d$ is inner then (it extends to $\Q_2$ and) its extension in still inner. Conversely, if $\lambda_d$ extends to an inner automorphism of $\Q_2$, 
then $\lambda_d$ itself must be inner by the maximality of $\D_2$ in $\Q_2$ \cite[Sect. 3.1]{ACR}. Note that if $\lambda_d = \Ad (d')$, with $d' \in \U(\D_2)$ then 
$\check{d}(0) = d'(0) \overline{d'(-1)}$. If moreover $\check{d}(0) = 1$, then $d'$ is nothing but $d_\infty$, which is thus in $\D_2$.\\

Finally, it is not difficult to realize that the above theorem could also be set in other representations. Nevertheless, we shall refrain from discussing this issue any further, not least
because it goes beyond the scope of the present work.

\bigskip
For the results in the next section we need to take a closer look at inner diagonal automorphisms, to which the rest of this section is  addressed. To do that, we first prove
a general lemma.

 \begin{lemma}
For any $d\in\U(\D_2)$ the sequence $(S_2^*)^ndS_2^n$ converge normwise to the scalar $d(0)1$.
\end{lemma}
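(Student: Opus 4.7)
The plan is to exploit the inductive-limit structure $\D_2 = \overline{\bigcup_k \D_2^k}$ and reduce the statement to a finite-level computation. I would first prove the exact identity $(S_2^*)^n d S_2^n = d(0)\,1$ for every $d \in \D_2^k$ and every $n \geq k$, and then pass to general $d \in \D_2$ by a standard density argument.

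The core computation is on the diagonal projections. Fix a multi-index $\alpha$ with $|\alpha|=k$ and $n \geq k$, and use the Cuntz relations $S_i^* S_j = \delta_{ij}$. If $\alpha = (2,\dots,2)$ then $S_\alpha = S_2^k$, so $(S_2^*)^n P_\alpha S_2^n = (S_2^*)^n S_2^k (S_2^*)^k S_2^n = (S_2^*)^n S_2^n = 1$. Otherwise let $j \leq k$ be the smallest index with $\alpha_j = 1$; then $(S_2^*)^j S_\alpha$ already contains the factor $S_2^* S_1 = 0$, hence $(S_2^*)^n P_\alpha S_2^n = 0$. On the other hand, in the canonical representation $S_2^* e_0 = e_0$ and $S_1^* e_0 = 0$, so $P_\alpha(0) = \|S_\alpha^* e_0\|^2$ equals $1$ precisely when $\alpha = (2,\dots,2)$ and vanishes otherwise. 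Since the projections $P_\alpha$ with $|\alpha|=k$ are pairwise orthogonal and sum to $1$, any $d \in \D_2^k$ decomposes uniquely as $d = \sum_{|\alpha|=k} c_\alpha P_\alpha$, and reading off both sides gives $d(0) = c_{(2,\dots,2)}$ and $(S_2^*)^n d S_2^n = c_{(2,\dots,2)}\,1$. So the two agree on the nose for every $n \geq k$.

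For a general $d \in \D_2$, given $\varepsilon>0$ I would pick $k$ and $d_k \in \D_2^k$ with $\|d-d_k\| < \varepsilon$. Because $S_2$ is an isometry, $\|(S_2^*)^n(d-d_k)S_2^n\| \leq \|d-d_k\| < \varepsilon$, and the estimate $|d(0)-d_k(0)| \leq \|d-d_k\| < \varepsilon$ is automatic since evaluating the diagonal entry at $0$ is a state on $\D_2$. Combining these bounds with the finite-level identity applied to $d_k$ yields $\|(S_2^*)^n d S_2^n - d(0)\,1\| < 2\varepsilon$ for all $n \geq k$, which is the claimed norm convergence. The only step requiring genuine care is the finite-level computation, and even that is brief; I note incidentally that unitarity of $d$ plays no role and the lemma holds verbatim for arbitrary $d \in \D_2$.
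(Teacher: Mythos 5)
Your argument is correct and is essentially the paper's own proof: approximate $d$ in norm by an algebraic element $d_k\in\D_2^k$, observe that $(S_2^*)^n d_k S_2^n$ collapses exactly to the scalar coefficient of $P_{(2,\dots,2)}$ once $n\geq k$, and control the discrepancy with $d(0)$ by the approximation error (the paper phrases this via the sup norm on the Cantor set, you via the state $d\mapsto d(0)$, which is the same estimate). Nothing further is needed.
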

\begin{proof}
Given any $\varepsilon>0$ we can pick an algebraic $d_k\in\D_2^k$ such that $\|d-d_k\|<\frac{\varepsilon}{2}$.
As $\|(S_2^*)^ndS_2^n-d(0)1\|\leq \|(S_2^*)^n(d-d_k)S_2^n\|+\|(S_2^*)^nd_kS_2^n-d(0)1\|\leq\frac{\varepsilon}{2}+\|(S_2^*)^nd_kS_2^n-d(0)1\|$.
So the conclusion is obtained if we can also prove that $\|(S_2^*)^nd_kS_2^n-d(0)1\|$ may be made as small as wished.
To this aim, it is enough to write $d_k$ as a finite linear combination of projections, say $d_k=\sum_{|\alpha|=k}d_{\alpha}P_\alpha$, where all the coefficients $d_\alpha$ are in $\mathbb{T}$. Thought of as a  continuous function on the Cantor
set, the unitary $d_k$ is nothing but $d_k(x)=\sum_{|\alpha|=k}d_\alpha\chi_\alpha(x)$, and so there holds the inequality $\sup_{x\in\{1,2\}^\mathbb{N}}|d(x)-\sum_{|\alpha|=k}d_\alpha\chi_\alpha(x)|<\frac{\varepsilon}{2}$.
In particular, we also have $|d(\{222 \ldots\})-d_{22\ldots2}|=|d(0)-d_{22\ldots2}|<\frac{\varepsilon}{2}$. But as soon as $n$ is grater than $k$ the product $(S_2^*)^nd_kS_2^n$ reduces to $d_{22\ldots2}1$, and the conclusion is thus proved,
being more exactly $\|(S_2^*)^ndS_2^n-d(0)1\|\leq\varepsilon$.
\end{proof}

\begin{remark}
The same conclusion is also got to in the canonical representation. Indeed, since $(S_2^*)^ndS_2^n e_k= d(k2^n)e_{k}$ for any $k\in\mathbb{Z}$, the thesis amounts to
proving that $\lim_n d(k2^n)=d(0)$ for any $d\in\D_2$, with the limit being uniform in $k$. This is again easily proved by approximating in norm any such $d$ with algebraic unitaries as closely as necessary, which is much
the same as in the proof above.  
\end{remark}
In our next result there appear infinite products in a $C^*$-algebraic framework. Since this is a topic seldom discussed 
 in the literature, some comments 
as to which sense the convergence is understood in are necessary to dispel any possible doubt. The most refined notion one could work with essentially dates back
to von Neumann, and is the one intended with respect to the direct net of finite subsets of $\mathbb{N}$ ordered by inclusion. However, we shall not need to be that demanding.
For our purposes, we may as well make do with the usual notion that an infinite product $\prod a_i$ converges (in norm) if the sequence $p_n\doteq \prod_{i=1}^n a_i$ does.

\begin{proposition}\label{innerness}
Let $d$ be a unitary in $\D_2$ such that $d(-1)=d(0)=1$. Consider the following claims:
\begin{enumerate}
\item The automorphism $\lambda_d$ is inner.
\item The automorphism $\lambda_d$ extends to $\Q_2$ and the infinite product $\prod_{i=1}^\infty (S_2^*)^idS_2^i$ converges in norm to a unitary in $\D_2$.
\item There exists a unitary $d'\in\D_2$ such that $\lambda_d(S_2)=d'S_2d'^*$.
\end{enumerate}
Then we have the chain of implications $1\Rightarrow 2\Leftrightarrow 3$. Moreover, when $\lambda_d$ is inner the above  infinite product converges to $d'^*$, with
$d'$ being the unique unitary in $\D_2$ such that $d'(0)=1$ and $d'\varphi(d')^*=d$ as well as satisfying $\lambda_d(S_2)=d'S_2d'^*$.

\end{proposition}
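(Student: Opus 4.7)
My plan is to establish $1 \Rightarrow 2$, $2 \Rightarrow 3$, and $3 \Rightarrow 2$ via a single common device---telescoping on the Cantor spectrum $\{1,2\}^{\mathbb{N}}$ of $\D_2$---combined with the lemma immediately preceding the proposition. Throughout I use that $\varphi$ acts on $\D_2$ as the shift $\sigma$, that $(S_2^*)^i d S_2^i$ evaluates at a spectral point $y$ as $d(2^i y)$, where $2^i y$ denotes the sequence obtained by prepending $i$ copies of $2$ to $y$, and the intertwining identity $\varphi(d')^* S_2 = S_2 d'^*$, which yields $d' S_2 d'^* = d' \varphi(d')^* S_2$.

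For $1 \Rightarrow 2$, maximality of $\D_2$ in $\O_2$ forces the unitary $u$ implementing $\lambda_d$ to lie in $\U(\D_2)$, and $u S_i u^* = d S_i$ translates via the intertwining rule into $d = u \varphi(u)^*$. Rescaling $u$ by the scalar $\overline{u(0)}$ leaves $\Ad(u)$ unchanged and delivers a unique $d' \in \U(\D_2)$ with $d'(0) = 1$ and $d' \varphi(d')^* = d$; uniqueness is because the ratio of any two such candidates is $\varphi$-invariant, hence a scalar by the strong ergodicity $\bigcap_n \varphi^n(\Q_2) = \Complessi 1$ recorded earlier in the paper. The extension is then $\Ad(d') \in \Aut(\Q_2)$, which restricts to $\lambda_d$ on $\O_2$ by construction. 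For the product, the identity $d(y) = d'(y) \overline{d'(\sigma y)}$ combined with $\sigma(2^i y) = 2^{i-1} y$ produces the telescoping
\[
\prod_{i=1}^n (S_2^*)^i d S_2^i (y) = \prod_{i=1}^n d'(2^i y) \overline{d'(2^{i-1} y)} = d'(2^n y) \overline{d'(y)},
\]
and the preceding lemma applied to $d'$ shows $d'(2^n y) \to d'(0) = 1$ uniformly in $y$, so the product converges in norm to $d'^*$.

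For $2 \Rightarrow 3$, setting $d' := v^*$ with $v$ the norm limit of the partial products: for $y$ in the clopen $\{y_1 = 2\}$ one has $\sigma(2y) = y$, and hence $v(\sigma y) = d(y) v(y)$, i.e., $\varphi(v) P_2 = d v P_2$; this rearranges into $\lambda_d(S_2) = d S_2 = v^* \varphi(v) S_2 = v^* S_2 v = d' S_2 d'^*$. For $3 \Rightarrow 2$, the equation $\lambda_d(S_2) = d' S_2 d'^*$ forces $d(y) = d'(y) \overline{d'(\sigma y)}$ on $\{y_1 = 2\}$, and the same telescoping yields norm convergence of the partial products to $d'(0) d'^* \in \U(\D_2)$. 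The delicate step, where I expect the main technical obstacle to lie, is showing that $\lambda_d$ itself extends: condition 3 pins down $d$ only on the clopen $\{y_1 = 2\}$, so to produce the Cuntz--Takesaki unitary $\check{d} \in \U(\D_2)$ required by Lemma \ref{unirel} I plan to combine condition 3 with the strong limit $d_\infty = \lim_k d \varphi(d) \cdots \varphi^{k-1}(d)$ furnished by the preceding theorem, converting the two convergences into a direct check that the candidate $d_\infty U d_\infty^* U^*$ (up to a rank-one correction at $e_0$) actually lies in $\D_2 \cdot U$.

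The ``moreover'' clause is obtained along the way: in the inner case, the unique $d'$ with $d'(0) = 1$ and $d' \varphi(d')^* = d$ is the one delivered by $1 \Rightarrow 2$, the product converges in norm to $d'^*$, and $\lambda_d(S_2) = d' S_2 d'^*$ follows from $d = d' \varphi(d')^*$ by the intertwining identity.
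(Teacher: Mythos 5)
Your arguments for $1\Rightarrow 2$, for $2\Rightarrow 3$, and for the convergence half of $3\Rightarrow 2$ are correct and essentially coincide with the paper's own proof: your spectral telescoping is exactly the identity $\prod_{i=1}^n (S_2^*)^i d S_2^i=(S_2^*)^n d' S_2^n d'^*$ on which the paper relies, combined with the lemma preceding the proposition, and the extension in the inner case is $\Ad(d')$ in both treatments. Your explicit justification of the existence and uniqueness of $d'$ (maximality of $\D_2$ in $\O_2$ plus $\bigcap_n\varphi^n(\Q_2)=\Complessi 1$) is a point the paper leaves implicit, and is welcome.

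The genuine gap is the step you yourself flag and postpone: deducing from condition 3 that $\lambda_d$ extends to $\Q_2$. This step is not just delicate, it cannot be carried out, because condition 3 does not imply extendibility. Take $d=P_2+P_{11}+e^{i\theta}P_{12}$ with $e^{i\theta}\neq 1$: then $d(0)=d(-1)=1$ and $dS_2=S_2$, so condition 3 holds with $d'=1$, yet $\lambda_d$ has no extension. Indeed, by Lemma \ref{unirel} an extension would produce $\check{d}\in\U(\D_2)$ with $\check{d}P_1=dP_1$ and $\check{d}\,UdU^*P_2=\varphi(\check{d})P_2$; iterating the second relation along $y\mapsto 2y$, noting that $UdU^*$ acts through the odometer and using continuity of $\check{d}$ at $222\ldots$, one finds $\check{d}(1x)=\check{d}(0)e^{i\theta}$ for every $x$, which contradicts $\check{d}(11x)=d(11x)=1$ and $\check{d}(12x)=d(12x)=e^{i\theta}$. (Equivalently, Proposition \ref{loctrivial} excludes this $d$.) So your plan of manufacturing $\check{d}$ from $d_\infty U d_\infty^* U^*$ cannot succeed. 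Note that the paper's own proof of $3\Rightarrow 2$ (``similar computations as in $1\Rightarrow 2$'') likewise only delivers the norm convergence of the infinite product and never addresses the extension clause of condition 2, which is in fact independent of condition 3; the proposition is used later only through $1\Rightarrow 2$ and $2\Rightarrow 3$. In short, you have proved everything that can be proved (and everything the paper actually proves and uses); what is missing in your write-up reflects an overstatement in the literal phrasing of the equivalence rather than an argument you failed to find.
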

\begin{proof}
For the implication $1\Rightarrow2$ we only need to prove that the infinite product converges. To this aim, let $d'\in\U(\D_2)$ such that
$d'(0)=1$ and $d'\varphi(d')^*=d$. By means of a simple computation by induction we gain the formula  $\prod_{i=1}^n (S_2^*)^idS_2^i=(S_2^*)^nd'S_2^n d'^*$. So the conclusion is arrived at by
a  straightforward application of the former lemma. For $2\Rightarrow 3$, we begin observing that it can be easily seen that $\prod_{j=1}^n (S_2^*)^jdS_2^j=(S_2^*)^n d_n S_2^n$.
By the following computations we get 
\begin{align*}
& \left(\prod_{j=1}^n (S_2^*)^jd^*S_2^j\right)S_2\left(\prod_{j=1}^n (S_2^*)^jdS_2^j\right) = \big((S_2^*)^n d_n^* S_2^n\big)S_2 \big((S_2^*)^n d_n S_2^n\big)\\
& = (S_2^*)^n d_n^* S_2 \big[S_2^n(S_2^*)^n\big] d_n S_2^n  \\
& = (S_2^*)^n d_n^* S_2 d_n S_2^n  \\
& = (S_2^*)^n d_n^*  \varphi(d_n) S_2 S_2^n  \\
& = (S_2^*)^n d^*  \varphi^n(d) S_2^{n+1}  \\
& = \big((S_2^*)^n d^*  S_2^n\big) d S_2    \; .
\end{align*}
Therefore, it follows that
\begin{align*}
& \lim_n \left(\prod_{j=1}^n (S_2^*)^jd^*S_2^j\right)S_2\left( \prod_{j=1}^n (S_2^*)^jdS_2^j\right)= \lim_n d (S_2^*)^n d^* S_2^n S_2\\
& = d \left( \lim_n (S_2^*)^nd^*S_2^n \right) S_2\\
& = d S_2 \; .
\end{align*}
So if we denote by $d'$ the limit of $\prod_{j=1}^n (S_2^*)^jd^*S_2^j$, we have the equality $d'S_2d'^*=dS_2$. Finally the implication $3\Rightarrow 2$  is dealt with by similar computations as in the proof of  $1\Rightarrow 2$. 
\end{proof}
\noindent
A well-known sufficient condition for the product in point 2 to exist (even with respect to the von Neumann notion of convergence)  is that $\sum_{i=1}^\infty \|1-(S_2^*)^id S_2^i\|<\infty$.

\section{Localized extendible automorphisms}
Whilst chiefly devoted to presenting our solution to the extension problem for localized automorphisms, this section also includes some results on general, i.e. possibly non-localized, unitaries.   
In fact, findings of this sort are interesting in their own right as well as being necessary tools to attack the case of localized automorphisms.  
First we prove a structural result about the diagonal elements with the extension property. 
\begin{theorem}\label{formula-d-check-d}
If $d\in \U(\D_2)$ is an extendible unitary, then $d$, $\check{d}$ and $U$ satisfy the following identity:
\begin{align}
d & =UdU^*\check{d}(S_1S_1^*+\varphi(\check{d})^*S_2S_2^*)\; . \label{formula-check}
\end{align}
\end{theorem}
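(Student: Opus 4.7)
The plan is to verify the identity on each of the two complementary projections $P_1 = S_1 S_1^*$ and $P_2 = S_2 S_2^*$ separately, which is enough because $P_1 + P_2 = 1$. Multiplying the target identity on the right by $P_1$ and by $P_2$ respectively reduces the claim to
\begin{align*}
d P_1 &= U d U^* \check{d}\, P_1, \\
d P_2 &= U d U^* \check{d}\, \varphi(\check{d})^* P_2.
\end{align*}
Since $U d U^*$ belongs to $\D_2$ (as $\mathrm{Ad}(U)$ preserves the MASA $\D_2$) and $\D_2$ is commutative, $UdU^*$ and $\check{d}$ (and $\varphi(\check{d})^*$) commute freely, so I do not have to worry about the order of these diagonal factors.

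For the $P_1$ identity I would use Equation~\eqref{Eq-ext-2}, rewritten after multiplying on the right by $S_2^*$, as $\check{d} U d P_2 = d U P_2$. Combined with the elementary identity $P_1 = U P_2 U^*$ (coming from $S_1 = U S_2$), this yields
$d P_1 = d U P_2 U^* = \check{d} U d P_2 U^* = \check{d}(UdU^*) P_1$,
which by commutativity in $\D_2$ gives the desired form.

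For the $P_2$ identity I would start from Equation~\eqref{Eq-ext-1}, $\check{d} U d S_1 = d S_2 \check{d} U$, and rearrange it as $d P_2 = \check{d} U d\, S_1 U^* \check{d}^* S_2^*$. The only algebraic manipulation needed is to push $U^*$ past $S_1$: from $S_2 U = U^2 S_2$ one gets $S_2 U^* = (U^*)^2 S_2$, and therefore $S_1 U^* = U S_2 U^* = U^* S_2$. Substituting this, together with the intertwining relation $S_2 \check{d}^* = \varphi(\check{d})^* S_2$, gives
\[
d P_2 = \check{d}\, (UdU^*)\, \varphi(\check{d})^* P_2,
\]
which is what I wanted. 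Adding the two pieces then produces~\eqref{formula-check}.

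The only mildly delicate step is the rewriting of $S_1 U^*$ in the second case; everything else is essentially bookkeeping using the defining $\Q_2$-relations and the fact that $\D_2$ is abelian. I do not expect any genuine obstacle here, as the two hypotheses in Lemma~\ref{unirel} are precisely tailored to give the two complementary pieces of the identity.
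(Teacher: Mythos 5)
Your argument is correct and follows essentially the same route as the paper: the paper likewise splits along $P_1+P_2=1$ and extracts the two relations $dS_1=\check{d}\,UdU^*S_1$ and $dS_2=\check{d}\,\varphi(\check{d})^*UdU^*S_2$ from Equations \eqref{Eq-ext-1}--\eqref{Eq-ext-2} (there quoted as \eqref{check-d-S1} and \eqref{check-d-S2} from the proof of Proposition \ref{udustar}), using the same manipulations $S_1=US_2$, $S_1U^*=U^*S_2$ and $S_2x=\varphi(x)S_2$. The only difference is that you re-derive these relations directly instead of citing them, which changes nothing of substance.
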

\begin{proof}
The claim follows at once from Formulas \eqref{check-d-S1} and \eqref{check-d-S2}. Indeed,
\begin{align*}
& dS_1= \check{d}UdU^*S_1 \\
&  dS_2= \check{d}\varphi(\check{d}^*)  UdU^*S_2\; .
\end{align*}
and we are done.
\end{proof}
Now a handful of corollaries can be easily derived.
\begin{corollary}\label{dcheck}
Let $\lambda_d$, $d\in \U(\D_2)$, be an extendible automorphism. Then
$$
\check{d}= \check{d}(0) \prod_{i= 1}^\infty (S_2^*)^i d^* UdU^* S_2^i\; .
$$
\end{corollary}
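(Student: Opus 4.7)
The plan is to derive an iterable recursion for $\check d$ from Theorem \ref{formula-d-check-d} and then pass to the limit via the preceding norm-convergence lemma for $(S_2^*)^n \check d\, S_2^n$.

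First I would rearrange Formula \eqref{formula-check} by left-multiplying both sides by the unitary $Ud^*U^*$. Setting $e := Ud^*U^*\, d \in \U(\D_2)$, this gives $e = \check d\,(P_1 + \varphi(\check d)^* P_2)$. Multiplying on the right by $S_2$, and using $P_1 S_2 = 0$ together with $P_2 S_2 = S_2$, reduces the equation to $eS_2 = \check d\,\varphi(\check d)^* S_2$. Next I would apply $S_2^*$ on the left and invoke two standard facts: (i) the map $x\mapsto S_2^* x S_2$ is multiplicative on $\D_2$, because for $a,b\in \D_2$ the would-be cross term $S_2^* a P_1 b S_2$ vanishes via $S_2^* S_1 = 0$; and (ii) $S_2^*\varphi(x) S_2 = x$ for every $x\in\D_2$, directly from $S_2^* S_2 = 1$ and $S_2 x = \varphi(x)S_2$. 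Together these give
$$S_2^* e S_2 = (S_2^* \check d\, S_2)\,\check d^*.$$
Since $S_2^* e S_2$ is again a unitary in $\D_2$, with inverse $S_2^* e^* S_2$, I can solve for $\check d$ to obtain the fixed-point relation
$$\check d = (S_2^* e^* S_2)\,(S_2^* \check d\, S_2).$$

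The next step is iteration. Applying $S_2^*(\cdot) S_2$ repeatedly to both sides, and using the multiplicativity extended by induction to $(S_2^*)^n(\cdot) S_2^n$, I obtain for every $n\geq 1$
$$\check d \;=\; \Bigl(\prod_{i=1}^n (S_2^*)^i e^* S_2^i\Bigr)\, (S_2^*)^n \check d\, S_2^n,$$
equivalently $\prod_{i=1}^n (S_2^*)^i e^* S_2^i = \check d\cdot (S_2^*)^n \check d^* S_2^n$. By the lemma immediately preceding the corollary, $(S_2^*)^n \check d^* S_2^n$ converges in norm to $\overline{\check d(0)}\cdot 1$, so both sides converge in norm to $\overline{\check d(0)}\,\check d$. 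Recalling that $e^* = d^* U d U^*$ and multiplying through by the scalar $\check d(0)\in\mathbb{T}$ yields the claimed identity.

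I do not expect a serious obstacle in this argument: once Theorem \ref{formula-d-check-d} and the preceding lemma are in hand, the corollary reduces to algebraic manipulations in $\D_2$ followed by a clean limiting step. The only subtlety worth flagging is that \emph{norm} (rather than merely strong) convergence of the infinite product is automatic from the identification of the partial products with $\check d\cdot (S_2^*)^n \check d^* S_2^n$ together with the norm-convergence statement of the lemma.
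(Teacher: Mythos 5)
Your proof is correct and follows essentially the same route as the paper: both rest on Theorem \ref{formula-d-check-d} and reduce the partial products to the identity $\prod_{i=1}^{n}(S_2^*)^i d^*UdU^*S_2^i=\check{d}\,(S_2^*)^n\check{d}^*S_2^n$, then conclude by the norm-convergence lemma for $(S_2^*)^n x S_2^n$. The only (cosmetic) difference is that you iterate a one-step fixed-point relation under $x\mapsto S_2^*xS_2$, whereas the paper telescopes the product directly via $\prod_{i=1}^{n}(S_2^*)^i x S_2^i=(S_2^*)^n x_n S_2^n$ applied to $x=\check{d}\varphi(\check{d})^*$ (and works with the adjoint expression $dUd^*U^*$).
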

\begin{proof}
By  formula \eqref{formula-check} 
we have that 
\begin{align*}
& \prod_{i=1}^k (S_2^*)^i dUd^*U^* S_2^i = \prod_{i=1}^k (S_2^*)^i \left( \check{d}(S_1S_1^*+\varphi(\check{d})^*S_2S_2^*) \right) S_2^i  \\
& = \prod_{i=1}^k (S_2^*)^i \left( \check{d}\varphi(\check{d})^*S_2S_2^*) \right) S_2^i = \prod_{i=1}^k (S_2^*)^i \left( \check{d}\varphi(\check{d})^*\right) S_2^i  \\
& = (S_2^*)^k (\check{d}\varphi(\check{d})^*)_k S_2^k =  (S_2^*)^k \check{d} \varphi^k(\check{d})^*S_2^k = (S_2^*)^k \check{d} S_2^k \check{d}^*
\end{align*}
which converges in norm to $\check{d}^*\check{d}(0)$ as $k\to +\infty$ by Proposition \ref{innerness}.
\end{proof}

\begin{corollary}
Let $\lambda_d$, $d\in \U(\D_2)$, be an extendible automorphism. Then the product $\prod_{i=1}^\infty (S_2^*)^i d S_2^i$ sits in $\D_2$ if and only if $\prod_{i=1}^\infty (S_2^*)^i UdU^* S_2^i$ does.
\end{corollary}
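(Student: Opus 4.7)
The plan is to deduce this equivalence directly from Corollary \ref{dcheck}. That corollary already guarantees that the infinite product $\prod_{i\ge 1}(S_2^*)^i d^* UdU^* S_2^i$ converges in norm to $\overline{\check{d}(0)}\,\check{d}\in\D_2$, so my strategy will be to factor this convergent product into the two products whose equivalence is asserted.

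The first step will be to establish the identity
\[
(S_2^*)^i (xy) S_2^i = (S_2^*)^i x S_2^i\cdot (S_2^*)^i y S_2^i \qquad (x,y\in\D_2,\; i\ge 0),
\]
which follows from $(S_2^*)^i S_2^i = 1$ together with the fact that the projection $S_2^i(S_2^*)^i$ commutes with any $y\in\D_2$. Applying this with $x=d^*$ and $y=UdU^*$ and then using the commutativity of $\D_2$ to reorder the finite product, I obtain, for every $n$,
\[
\prod_{i=1}^n (S_2^*)^i d^* UdU^* S_2^i = A_n^{*}\, B_n ,
\]
where $A_n\doteq\prod_{i=1}^n(S_2^*)^i d S_2^i$ and $B_n\doteq\prod_{i=1}^n(S_2^*)^i UdU^* S_2^i$, both of which are unitaries in $\D_2$.

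To finish, I note that since the left-hand side converges in norm to a unitary of $\D_2$ by Corollary \ref{dcheck}, the identities $B_n = A_n(A_n^*B_n)$ and $A_n = B_n(A_n^*B_n)^{*}$, which are legitimate because $A_n$ and $B_n$ are unitary and everything in sight commutes, show that the norm convergence of $\{A_n\}$ to a unitary in $\D_2$ is equivalent to the norm convergence of $\{B_n\}$ to a unitary in $\D_2$. I do not foresee any serious obstacle; the only subtle point is the factorization identity, which must not be read as a statement about $(S_2^*)^i$ being an inverse of $S_2^i$: it works precisely because $\D_2$ is commutative and only the one-sided relation $(S_2^*)^iS_2^i=1$ is used.
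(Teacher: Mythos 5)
Your proof is correct and takes essentially the same route as the paper: the splitting $(S_2^*)^i(xy)S_2^i=(S_2^*)^ix S_2^i\,(S_2^*)^iy S_2^i$ for $x,y\in\D_2$ plus commutativity of $\D_2$ yields exactly the paper's factorization (the paper writes $\prod_i(S_2^*)^idS_2^i=\bigl(\prod_i(S_2^*)^i\check{d}\varphi(\check{d})^*S_2^i\bigr)\bigl(\prod_i(S_2^*)^iUdU^*S_2^i\bigr)$, whose correction factor is precisely the adjoint of your $D_n=A_n^*B_n$), and in both arguments the conclusion rests on that $\check{d}$-factor converging in norm to a unitary of $\D_2$. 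The only cosmetic difference is that you invoke Corollary \ref{dcheck} directly, whereas the paper reruns the same computation from Theorem \ref{formula-d-check-d}.
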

\begin{proof}
By formula \eqref{formula-check} we have that 
\begin{align*}
& \prod_{i=1}^k (S_2^*)^i d S_2^i = \prod_{i=1}^k (S_2^*)^i \left[ \check{d} UdU^*(S_1S_1^*+\varphi(\check{d})^*S_2S_2^*) \right] S_2^i  \\
& = \prod_{i=1}^k (S_2^*)^i \left[ \check{d}\varphi(\check{d})^* UdU^* S_2S_2^*) \right] S_2^i = \prod_{i=1}^k (S_2^*)^i \left[ \check{d}\varphi(\check{d})^* UdU^* \right] S_2^i  \\
& = \left(\prod_{i=1}^k (S_2^*)^i \check{d}\varphi(\check{d})^*S_2^i \right) \left(\prod_{i=1}^k (S_2^*)^i UdU^* S_2^i \right)\; .
\end{align*}
Since the sequence 
$$
\prod_{i=1}^k (S_2^*)^i \check{d}\varphi(\check{d})^*S_2^i 
$$
is norm-convergent as $k\to +\infty$ we have the claim.
\end{proof}
The result above does not depend on whichever topology  the convergence of the infinite products in the statement is understood in.\\

The former corollary applies in particular to localized unitaries. But in this case only finitely many terms occur in the infinite product that gives the corresponding $\check{d}$. More precisely, if $d\in\U(\D_2^k)$  then
 $\check{d} = \check{d}(0) \prod_{i = 1}^{k-1} (S_2^*)^i d^* UdU^* S_2^i\;$. As a result, the following corollary is proved forthwith.
\begin{corollary}\label{loccheck}
Let $k$ be a nonnegative integer
and let $d \in \U(\D_2^k)$ be such that $\lambda_d$ is extendible. Then $\check{d} \in \U(\D_2^{k-1})$.
\end{corollary}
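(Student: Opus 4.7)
The plan is to derive the corollary directly from Corollary \ref{dcheck} by observing that, when $d\in\U(\D_2^k)$, the infinite product expressing $\check{d}$ truncates to a finite product whose surviving factors each lie in some $\D_2^j$ with $j\le k-1$.

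First I would verify that $d^*UdU^*\in\U(\D_2^k)$. This is immediate from the fact, recalled in the Preliminaries, that $\Ad(U)$ preserves $\D_2^k$. Next, for an arbitrary $x\in\D_2^k$, compute the compression $(S_2^*)^i x S_2^i$ in the canonical representation: since $S_2 e_j = e_{2j}$, it is the diagonal operator $j\mapsto x(2^i j)$. Because $x\in\D_2^k$ is $2^k$-periodic as a function on $\Relativi$, this compressed diagonal is $2^{k-i}$-periodic in $j$ for $1\le i\le k$ and is just the scalar $x(0)\cdot 1$ for $i\ge k$. In multi-index language, $(S_2^*)^i x S_2^i$ retains only the tail of length $k-i$ of the original projections $P_\alpha$, so it belongs to $\D_2^{k-i}$ for $i<k$ and collapses to the coefficient $x_{2^k}\cdot 1$ for $i\ge k$.

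Applied to $x = d^*UdU^*$, the diagonal entry at $0$ is $\overline{d(0)}\,d(-1)$. By Proposition \ref{pointspectrum}, $d(0)=d(-1)$, so this scalar equals $1$. Thus every factor in Corollary \ref{dcheck} with index $i\ge k$ is the identity, and the formula reduces to the finite product
$$\check{d}\;=\;\check{d}(0)\,\prod_{i=1}^{k-1}(S_2^*)^i d^*UdU^*S_2^i.$$
The $i$-th factor lies in $\D_2^{k-i}\subseteq\D_2^{k-1}$, while $\check{d}(0)\in\Toro\cdot 1\subseteq\D_2^{k-1}$, so the whole product sits in $\U(\D_2^{k-1})$, which is the claim.

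There is no serious obstacle: the statement is essentially read off from Corollary \ref{dcheck}. The only point requiring a small argument is why the tail of the product is trivial, and this is exactly supplied by the eigenvalue-matching condition $d(0)=d(-1)$ guaranteed by extendibility; once that is in hand, the bookkeeping of which $\D_2^j$ each surviving factor belongs to is routine.
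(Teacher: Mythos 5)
Your proposal is correct and follows essentially the same route as the paper: the paper also obtains the corollary directly from Corollary \ref{dcheck}, noting that for $d\in\U(\D_2^k)$ the infinite product truncates to $\check{d}=\check{d}(0)\prod_{i=1}^{k-1}(S_2^*)^i d^*UdU^*S_2^i$, whose factors lie in $\D_2^{k-i}\subseteq\D_2^{k-1}$. Your explicit check that the tail factors equal $1$ via $d(0)=d(-1)$ (Proposition \ref{pointspectrum}) just spells out the detail the paper leaves implicit.
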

This simple yet effective remark enables us to prove a preliminary result.
\begin{proposition}\label{loctrivial}
Let $b \in \U(\D_2^k)$ be such that $\lambda_b$ is extendible and $b S_2 = S_2$. Then $b=1$.
\end{proposition}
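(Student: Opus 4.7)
The plan is to induct on $k$. The base case $k=0$ is immediate: $\D_2^0=\mathbb{C}1$ consists of scalars, and $bS_2=S_2$ forces $b=1$.

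For the inductive step with $k\geq 1$, the first move is to extract a decomposition of $b$ along $P_1$ and $P_2$. From $bS_2=S_2$ one reads off $bP_2=P_2$. Substituting $bS_2=S_2$ together with $US_2=S_1$ into Equation \eqref{Eq-ext-2}, $\check{b}UbS_2=bS_1$, gives $\check{b}S_1=bS_1$; right-multiplying by $S_1^*$ yields $\check{b}P_1=bP_1$. Therefore $b=\check{b}P_1+P_2$. By Corollary \ref{loccheck}, $\check{b}\in\U(\D_2^{k-1})$. When $k\geq 2$ the projections $P_1,P_2$ already sit inside $\D_2^{k-1}$, so the right-hand side lies in $\D_2^{k-1}$. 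Since $b$ remains extendible and still satisfies $bS_2=S_2$, the inductive hypothesis applied at the strictly lower level $k-1$ gives $b=1$.

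The case $k=1$ is the one place where the second defining equation \eqref{Eq-ext-1} becomes indispensable. Here $\check{b}\in\D_2^0$ reduces to a scalar $c\in\mathbb{T}$, so $b=cP_1+P_2$. Substituting into $\check{b}UbS_1=bS_2\check{b}U$ and simplifying with $US_1=S_2U$, $P_1S_2=0$ and $P_2S_2=S_2$, the identity collapses to $c^2S_2U=cS_2U$, which forces $c=1$ and hence $b=1$.

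I expect the main subtlety to be exactly this $k=1$ step: for $k\geq 2$, Equation \eqref{Eq-ext-2} alone is enough to drop $b$ into a strictly smaller subalgebra and keep the induction running, but at level one the decomposition $b=cP_1+P_2$ does not a priori sit in $\D_2^0$, and the full pair of equations from Lemma \ref{unirel} must be brought to bear. An alternative, induction-free endgame would be to argue directly that $b=\check{b}$ and then appeal to Proposition \ref{fixed-point-check}; however, checking $\check{b}P_2=P_2$ seems to require essentially the same computation in disguise.
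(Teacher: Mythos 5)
Your argument is correct and is essentially the paper's own proof: the inductive step is the same decomposition $b=\check{b}P_1+P_2$ obtained from \eqref{Eq-ext-2} together with Corollary \ref{loccheck}, which pushes $b$ down into $\U(\D_2^{k-1})$. The only divergence is the base case: where you settle $k=1$ by a direct computation with \eqref{Eq-ext-1} (which is fine, though the simplification there really uses $P_1S_1=S_1$ and $P_2S_1=0$ rather than $P_1S_2=0$ and $P_2S_2=S_2$), the paper disposes of it in one line via Proposition \ref{pointspectrum}: $bS_2=S_2$ gives $b(0)=1$, and extendability then forces $b(-1)=1$, i.e.\ $b=1$.
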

\begin{proof}
We proceed by induction. 
The case $k=1$ follows immediately from Proposition \ref{pointspectrum}.
Now let us suppose that $b \in \U(\D_2^k)$ with $k >1$.
Notice that one must have $b = b S_1 S_1^* + S_2 S_2^*$ and, moreover, $b S_1 = \lambda_b(S_1) = \lambda_b(US_2) = \check{b} U S_2 = \check{b} S_1$,
thus giving
$$b = \check{b} S_1S_1^* + S_2 S_2^* \ . $$
By Corollary \ref{loccheck}, the unitary $b$ must actually be in $\U(\D_2^{k-1})$ and we are done.
\end{proof}

We are now in a position to prove the main result of the section.

\begin{theorem}
Let $d \in \U(\D_2^k)$ be such that $\lambda_d$ is extendible. Then $\lambda_d$ is the product of a gauge automorphism and an inner localized automorphism.
\end{theorem}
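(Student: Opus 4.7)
The plan is to use Proposition~\ref{loctrivial} as a rigidity engine: I will manufacture an explicit localized diagonal unitary $u\in\U(\D_2^{k-1})$ such that $b:=du\varphi(u)^*$ lies in $\U(\D_2^k)$, is extendible, and satisfies $bS_2=S_2$; Proposition~\ref{loctrivial} will then force $b=1$, and $\lambda_d$ will unmask itself as $\Ad(u^*)$.

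The standard first step is a gauge reduction. By Proposition~\ref{pointspectrum} one has $d(0)=d(-1)$; setting $e^{i\theta}:=d(0)$ and replacing $\lambda_d$ with $\alpha_{-\theta}\circ\lambda_d=\lambda_{e^{-i\theta}d}$ (still extendible, the gauge being extendible) we may assume $d(0)=d(-1)=1$. It then suffices to exhibit $u\in\U(\D_2^{k-1})$ with $d=\varphi(u)u^*$, so that $\lambda_d=\Ad(u^*)$. Working in the canonical representation and using $\varphi(u)(j)=u(\lfloor j/2\rfloor)$, the requirement $bS_2=S_2$ becomes the recursion
\[
u(2m)=\overline{d(2m)}\,u(m),\qquad m\in\mathbb{Z}.
\]
I would define $u$ by setting $u(0):=1$, $u(m):=\prod_{i=1}^{k-1}d(2^im)$ for odd $m$, and propagating to the remaining entries by the above recursion. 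A direct computation yields the closed form $u(2^vm)=\prod_{i=v+1}^{k-1}d(2^im)$ for odd $m$ and $0\le v\le k-2$, and $u(2^vm)=1$ as soon as $v\ge k-1$; the latter because $d\in\D_2^k$ together with $d(0)=1$ forces $d(2^im)=1$ whenever $i\ge k$ and $m$ is odd.

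The main obstacle is then verifying that $u$ is periodic of period $2^{k-1}$, so that it really lives in $\U(\D_2^{k-1})$. I would argue this by cases on the $2$-adic valuation $v$ of $j$: if $v\le k-2$, then $j+2^{k-1}=2^v(m+2^{k-1-v})$ still has valuation $v$ and its odd part is $m':=m+2^{k-1-v}$; each factor $d(2^im')$ in the closed form for $u(j+2^{k-1})$ differs from $d(2^im)$ only by a shift of $2^{i+k-1-v}\ge 2^k$, invisible to the period-$2^k$ function $d$. If instead $v\ge k-1$, both values are trivially $1$. This delicate but completely combinatorial bookkeeping delivers $u\in\U(\D_2^{k-1})$.

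With this $u$ in hand, $b:=du\varphi(u)^*$ lies in $\U(\D_2^k)$ (since $d,u,\varphi(u)$ all do); it is extendible because $\lambda_b=\lambda_d\circ\Ad(u)$ is the composition of two extendible automorphisms; and $bS_2=S_2$ by the defining recursion. Proposition~\ref{loctrivial} then yields $b=1$, whence $d=\varphi(u)u^*=u^*\varphi(u)$ by commutativity of $\D_2$, so that $\lambda_d=\Ad(u^*)$. Combining with the initial gauge reduction, $\lambda_d=\alpha_\theta\circ\Ad(u^*)$ is a product of a gauge automorphism and a localized inner automorphism, as required.
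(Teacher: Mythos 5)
Your proposal is correct and follows essentially the same route as the paper: gauge reduction via Proposition \ref{pointspectrum}, production of a unitary $u\in\U(\D_2^{k-1})$ conjugating $dS_2$ back to $S_2$, and then Proposition \ref{loctrivial} applied to $b=du\varphi(u)^*$. The only cosmetic difference is that you construct $u$ by an explicit entrywise recursion in the canonical representation, which amounts to re-deriving the localized case of Proposition \ref{innerness}: your $u$ is precisely the finite product $\prod_{i\geq 1}(S_2^*)^i d^* S_2^i$ that the paper invokes there.
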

\begin{proof}
As explained, we can suppose without loss of generality that $d(0) = d(-1) = 1$.
Proposition \ref{innerness} yields a $d'$ in $\U(\D_2^{k-1})$ such that ${\rm Ad}(d'^*) \circ \lambda_d(S_2) = S_2$.
The conclusion readily follows by Proposition \ref{loctrivial} applied to $b = d'^* \varphi(d') d \in \U(\D_2^k)$.
\end{proof}

At this point, it is worthwhile to stress that the proof of the main theorem also shows that for localized automorphisms the three conditions in Proposition \ref{innerness} are actually equivalent.
Indeed, whenever $d$ is localized the infinite product in \emph{2.} does converge being a finite product, and thus the unitary in \emph{3.} is localized too.    
There is yet another consequence of Proposition \ref{loctrivial} that deserves to be highlighted as a separate statement.
\begin{corollary}
The groups $\Aut_{\D_2}(\Q_2)_{\textrm{loc}}$ and $\Aut_{C^*(S_2)}(\Q_2)$ have trivial intersection.
\end{corollary}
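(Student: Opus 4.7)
The plan is to unpack the defining conditions on a hypothetical nontrivial element of the intersection and show that it reduces immediately to the hypotheses of Proposition \ref{loctrivial}.

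First I would take $\alpha$ in $\Aut_{\D_2}(\Q_2)_{\mathrm{loc}} \cap \Aut_{C^*(S_2)}(\Q_2)$ and translate both memberships into workable algebraic data. From $\alpha \in \Aut_{\D_2}(\Q_2)$ we know, by the results of Section 3, that $\alpha = \widetilde{\lambda_d}$ for a unique $d \in \widetilde{\U}(\D_2)$; the localized condition then forces $d \in \U(\D_2^k)$ for some $k$, using that $\D_2 \cap \bigcup_j \F_2^j = \bigcup_j \D_2^j$. From $\alpha \in \Aut_{C^*(S_2)}(\Q_2)$, I get in particular $\alpha(S_2) = S_2$, which translated through the formula $\alpha(S_2) = d S_2$ becomes
\[
d S_2 = S_2.
\]

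At this point $d$ satisfies exactly the hypotheses of Proposition \ref{loctrivial} (with $b = d$): it is a localized extendible diagonal unitary with $d S_2 = S_2$. Applying that proposition directly yields $d = 1$, and hence $\lambda_d = \mathrm{id}_{\O_2}$. By the uniqueness of the extension (the rigidity result cited from \cite{ACR}, which says that two automorphisms of $\Q_2$ that agree on $\O_2$ coincide), this forces $\alpha = \widetilde{\lambda_d} = \mathrm{id}_{\Q_2}$, giving the claimed triviality of the intersection.

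There is essentially no obstacle here: all the heavy lifting has already been done in Proposition \ref{loctrivial}, whose inductive proof rests in turn on Corollary \ref{loccheck} (the reduction in localization level of $\check{d}$) and the necessary condition $d(0)=d(-1)$ from Proposition \ref{pointspectrum}. The only point one must be careful about is not to conclude triviality prematurely from $\lambda_d = \mathrm{id}$ on $\O_2$ alone; this is precisely where the uniqueness of extension to $\Q_2$ is invoked to close the argument.
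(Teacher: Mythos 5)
Your proof is correct and is exactly the argument the paper intends: the corollary is stated as an immediate consequence of Proposition \ref{loctrivial}, obtained by writing an element of the intersection as $\widetilde{\lambda_d}$ with $d$ localized, extracting $dS_2=S_2$ from the fixing of $C^*(S_2)$, and concluding $d=1$ and then $\alpha=\mathrm{id}$ via uniqueness of the extension. Your extra care about $\D_2\cap\bigcup_j\F_2^j=\bigcup_j\D_2^j$ and about not stopping at $\lambda_d=\mathrm{id}$ on $\O_2$ is sound but does not change the route.
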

One might wonder whether the subscript \emph{loc} may be got rid of altogether in the last statement.

\begin{remark}
The above discussion implies that if $d_n, d \in \U(\D_2)$ are such that $\lambda_{d_n}$ and $\lambda_d$ are extendible to $\Q_2$ and $\|d_n - d\| \to 0$ as $n \to \infty$ then
one has the continuity property 
$$\|\check{d_n} (S_1 S_1^* + \varphi(\check{d_n})^* S_2 S_2^*) - \check{d} (S_1 S_1^* + \varphi(\check{d})^* S_2 S_2^*)\| \to 0$$ 
as $n \to \infty$. If we multiply by $S_1S_1^*$ we see that $\|\check{d}_nS_1-\check{d}S_1\|$ goes to zero as well. In order to prove the continuity of the map $d\rightarrow \check{d}$ with respect to the norm topology, we should also check
$\lim_n \|\check{d}_nS_2-\check{d}S_2\|=0$. As a matter of fact, all we can prove is  the convergence only holds strongly. This is indeed a consequence of a general fact applied to $\{\check{d}^*\check{d_n}\}$: for any sequence 
$\{u_n\}\subset\U(\D_2)$ such that both
$\lim_n\|u_nS_1-S_1\|$ and $\lim_n\|u_nS_2u_n^*-S_2\|$ are zero, one also has $\lim_n |u_{n}(2k)-1|=0$, for every $k\in\mathbb{Z}$. However, the limit will in general fail to be uniform in $k$. This is
because we can only say $|u_{n}(2k)-1|$ is less than the sum $|u_{n}(2k)-u_n(k)|+|u_n(k)-u_n(\frac{k}{2})|+\ldots+|u_n\left(\frac{k}{2^h}\right)-1|$, with $h$ being $\doteq\min\{l: \frac{k}{2^l}\,\textrm{is odd}\}$, and so the number
of terms occurring in the sum depends upon what $k$ is (at worst, when $k=2^l$, exactly $l$ terms are needed).

\end{remark}

\section{Localizing $\check{d}$}
There is a last property of the group  homomorphism $\;\check{}\;$ that is well worth a thorough discussion. We have already observed that $\check{d}$ is localized whenever $d$ is.
Moreover, its kernel is obviously made up of localized unitaries. This seems to indicate that the other way around might also hold true, namely that an extendible $d\in\U(\D_2)$ whose $\check{d}$  is localized itself. 
This is quite the situation we are in. Far from immediate, the proof requires some preliminary work to do instead. More precisely, we need to give a closer look at the image of our homomorphism. 
To do so we point out that between  the entries of $d$ and $\widecheck{d}$ with respect to the canonical basis $\{e_k\}_{k\in\mathbb{Z}}$ in the canonical representation on $\ell_2(\mathbb{Z})$ there hold some relations.
More precisely, by evaluating  \eqref{Eq-ext-1} and \eqref{Eq-ext-2} on $e_{k-1}$ and $e_k$ respectively, we get the following couple of equations
\begin{align}
&   d(2k-1)   \check{d}(2k)=\check{d}(k) d(2k)	\label{eq-1}\\
& \check{d}(2k+1)d(2k) = d(2k+1)
\end{align}
to be satisfied by every integer $k$.
Furthermore, the former equation  can be recast  in terms of the canonical endomorphism $\varphi$ as 
$\widecheck{d}^*\varphi(\widecheck{d})S_2S_2^*=d^*UdU^*S_2S_2^*$.
Here below follows a technical result. 
\begin{lemma}
There exists a unique $d\in \ell_\infty(\mathbb{Z})$ such that $d(0)=1$ and $\check{d}=z\in \mathbb{T}$. In addition, it is given by $d=U_z\varphi(U_z)^*$ with $U_ze_k=z^k e_k, k \in \mathbb{Z}$.
\end{lemma}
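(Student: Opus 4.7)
The plan is to exploit the two pointwise identities
\begin{align*}
d(2k-1)\check{d}(2k) &= \check{d}(k)\,d(2k), \\
\check{d}(2k+1)\,d(2k) &= d(2k+1), \qquad k \in \mathbb{Z},
\end{align*}
recorded just above the lemma statement, and specialize $\check{d}$ to the constant function equal to $z \in \mathbb{T}$. These relations then collapse to the simple two-step recurrence
\[
d(2k) = d(2k-1), \qquad d(2k+1) = z\, d(2k),
\]
which propagates $d(0) = 1$ outward in both directions: upward for $k \geq 1$ by using the displayed relations as written, and downward for $k \leq 0$ by inverting the second one as $d(2k) = z^{-1} d(2k+1)$. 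Since the recursion has no free parameters once $d(0)$ is fixed, uniqueness in $\ell_\infty(\mathbb{Z})$ is immediate.

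A routine induction then yields the closed form $d(2k) = z^k$ and $d(2k+1) = z^{k+1}$, or equivalently $d(n) = z^{\lceil n/2 \rceil}$. Because $|z|=1$, this $d$ lies in $\ell_\infty(\mathbb{Z})$ with unit norm, so existence and uniqueness are settled simultaneously.

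For the identification $d = U_z\varphi(U_z)^*$, I would compute diagonal entries directly in the canonical representation. Expanding $\varphi(U_z) = S_1 U_z S_1^* + S_2 U_z S_2^*$ and using $S_2 e_k = e_{2k}$ and $S_1 e_k = e_{2k+1}$, each summand is seen to be diagonal, and together they give $\varphi(U_z)(n) = z^{\lfloor n/2 \rfloor}$. Multiplying by $U_z$, whose $n$-th diagonal entry is $z^n$, yields
\[
\bigl(U_z \varphi(U_z)^*\bigr)(n) = z^{n - \lfloor n/2 \rfloor} = z^{\lceil n/2 \rceil},
\]
which matches the closed form obtained from the recurrence.

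I do not anticipate any real obstacle here: the whole argument is a pointwise specialization of the two relations linking $d$ and $\check{d}$, combined with a short diagonal computation for $\varphi(U_z)$. The only mild subtlety is inverting the recurrence for negative indices, which is a purely mechanical check.
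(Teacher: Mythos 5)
Your proposal is correct and follows essentially the same route as the paper: substitute $\check{d}\equiv z$ into the two entrywise relations, observe that the resulting recurrence is uniquely determined by $d(0)=1$, and check that $U_z\varphi(U_z)^*$ satisfies it. Your explicit closed form $d(n)=z^{\lceil n/2\rceil}$ and the direct diagonal computation of $\varphi(U_z)$ merely spell out what the paper compresses into the identity $\varphi(d)(k)=d([k/2])$.
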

\begin{proof} 
Substituting $\check{d}(k)=z$ in the former conditions, we find the equations
\begin{align*}
z d(2k) & = d(2k+1) \\
d(2k) & = d(2k-1) 
\end{align*}
whose solution is uniquely determined by $d(0)$. To conclude, just note that $U_z\varphi(U_z)^*$ satisfies the equations thanks to $\varphi(d)(k)=d([k/2])$, $k\in\mathbb{Z}$.
\end{proof}
\begin{proposition}\label{prop-intersec-tor-check}
The image of $\;\check{}\,$ intersects $\mathbb{T}$ on the set of all roots of unity of order $2^k$, $k\in\mathbb{N}$.
\end{proposition}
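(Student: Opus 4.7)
My plan is to leverage the lemma immediately preceding this proposition, which pins down the unique $d\in\ell_\infty(\mathbb{Z})$ with $d(0)=1$ and $\check{d}=z$ as $d=U_z\varphi(U_z)^*$; using $\varphi(x)(k)=x([k/2])$ this reads $d(j)=z^{j-[j/2]}$ in the canonical basis. Since composing $\lambda_d$ with a gauge automorphism rescales $d$ by a unimodular scalar and leaves $\check{d}$ unchanged, the normalization $d(0)=1$ costs no generality, and the question reduces to: for which $z\in\mathbb{T}$ is this explicit sequence an element of $\widetilde{\U}(\D_2)$?

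For necessity, I would exploit the description of $\D_2\subset\ell_\infty(\mathbb{Z})$ as the norm closure of $\bigcup_n \D_2^n$, where $\D_2^n$ consists of the period-$2^n$ sequences. Approximating $d$ in norm by such periodic sequences forces $|d(j)-d(j')|\to 0$ whenever $j\equiv j'\pmod{2^n}$. Specializing to $j=2^n$ and $j'=0$ yields $|z^{2^{n-1}}-1|\to 0$. Writing $z=e^{2\pi i\theta}$, a short binary-expansion argument shows that $2^{n-1}\theta$ tends to an integer modulo $1$ only when $\theta$ admits a terminating binary expansion, i.e.\ $\theta\in\mathbb{Z}[1/2]$, which is precisely the statement that $z$ is a $2^k$-th root of unity.

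For sufficiency, given $z$ with $z^{2^k}=1$, I would first check periodicity of the candidate $d$: a direct computation shows $(j+2^{k+1})-[(j+2^{k+1})/2]=(j-[j/2])+2^k$, so $d(j+2^{k+1})=d(j)z^{2^k}=d(j)$, placing $d$ in $\D_2^{k+1}$. To conclude that $\lambda_d$ extends, I would invoke Lemma \ref{unirel} with the constant candidate $\tilde{d}=z\cdot 1\in\U(\D_2)$: the two operator equations \eqref{Eq-ext-1}--\eqref{Eq-ext-2} translate into the entrywise identities $d(2k-1)\,z=z\,d(2k)$ and $z\,d(2k)=d(2k+1)$, both of which follow at once from the explicit formula for $d$ by reading off exponents (each side equals $z^{k+1}$).

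The main obstacle is the necessity direction, and within it the passage from $z^{2^{n-1}}\to 1$ to $z$ being a $2^k$-th root of unity. While elementary, it is really the whole content of the proposition and is the precise place where the $2$-adic arithmetic of $\D_2$ enters, via the observation that the doubling map $\theta\mapsto 2\theta$ on $\mathbb{R}/\mathbb{Z}$ sends an orbit to the trivial class only from the dense dyadic subset $\mathbb{Z}[1/2]/\mathbb{Z}$.
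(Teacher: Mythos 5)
Your proof is correct, and while it starts from the same reduction as the paper --- normalizing $d(0)=1$ via a gauge automorphism and invoking the preceding lemma to force $d=U_z\varphi(U_z)^*$, i.e.\ $d(j)=z^{j-\lfloor j/2\rfloor}$ --- the two halves are carried out differently. For necessity the paper splits into two cases ($z$ a root of unity of non-$2$-power order, and $z$ not a root of unity) and, fixing one algebraic approximant $\sum_{|\alpha|=k}c_\alpha S_\alpha S_\alpha^*$, evaluates it at $e_0$ and at $e_{2^k l}$, using in the second case the density of $\{z^{2^{k-1}l}\}_{l\in\mathbb{Z}}$ in $\mathbb{T}$; you instead let the approximation level tend to infinity, extract only the two entries $d(2^n)$ and $d(0)$ to get $z^{2^{n-1}}\to 1$, and conclude by the doubling-map/binary-expansion observation that $\mathrm{dist}(2^n\theta,\mathbb{Z})\to 0$ forces the binary digits of $\theta$ to be eventually constant, hence $\theta\in\mathbb{Z}[1/2]$. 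This unifies the paper's two cases into one self-contained argument (it would be worth writing out the one-line digit argument: taking blocks of length $2$ shows consecutive digits eventually agree). For sufficiency the paper cites the fact from the earlier work that $U_z\in\D_2$ for $z$ of order $2^k$ and uses that $\lambda_{U_z\varphi(U_z)^*}=\mathrm{Ad}(U_z)$ is inner, whereas you verify directly that $d$ is $2^{k+1}$-periodic, hence in $\D_2^{k+1}$, and check the relations of Lemma \ref{unirel} entrywise with the constant candidate $\tilde d=z1$; this is more computational but avoids the external reference, while the paper's route is more conceptual in that it identifies the extending automorphism as $\mathrm{Ad}(U_z)$. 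Both arguments are sound, and your entrywise identities agree with the paper's equations obtained from \eqref{Eq-ext-1} and \eqref{Eq-ext-2}.
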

\begin{proof}
Whenever $z$  is a root of unity of order a power  $2^k$ the corresponding $U_z$ is in $\D_2$, as shown in \cite{ACR}, and $\check{d}=z$ with $d\doteq U_z\varphi(U_z)^*$.
To conclude, we need to prove that nothing else is contained in the intersection. To this aim, note that it is not restrictive to assume
$d(0)=1$ if $d$ satisfies $\check{d}=z$. This allows us to consider $d$ of the form $U_z\varphi(U_z)^*$. There are two cases to deal with. They can be both worked out with some ideas
borrowed from \cite{ACR}.   
In the first, $z$  is a root of unity, but not of order a power  of $2$.
This is handled as follows. Since any projection $P\in \D_2$ is in the linear algebraic span of $\{S_\alpha S_\alpha^*\}_{\alpha \in W}$, we have that $U_z\varphi(U_z)^*=\sum_{|\alpha|=h}c_\alpha S_\alpha S_\alpha^*$. Without loss of generality, we may suppose that $h\geq 2$. By definition, $U_z\varphi(U_z)^* e_0=e_0$ so that $c_{(2,\ldots , 2)}=1$. The equality $U_z\varphi(U_z)^* e_{2^h l}=z^{2^{h-1} l}e_{2^h l}$ gives $z^{2^{h-1} l}=1$, which
is absurd as soon as $l=1$.\\
In the second case,  $z$ is not a root of unity. If $U_z\varphi(U_z)^*$ did belong to $\D_2$, then there would exist 
a positive integer $k$ such that
$$\big\|U_z\varphi(U_z)^*-\sum_{|\alpha |=k} c_\alpha S_\alpha S_\alpha^*\big\|<\varepsilon$$
with $\sum_{|\alpha |=k} c_\alpha S_\alpha S_\alpha^*$ being unitary as well.
In particular, we would have 
$$\| z^i e_{2i}-\sum_{|\alpha |=k} c_\alpha S_\alpha S_\alpha^*e_{2i}\|<\varepsilon$$ for every integer $i$.
Taking $i=0$ we would thus get
$$
\| e_0-\sum_{|\alpha |=k} c_\alpha S_\alpha S_\alpha^*e_0\|=|1- c_{(2,\ldots, 2)}| <\varepsilon\; ,
$$
and, in particular, for $i=2^{k-1}l$
$$
\| z^{2^{k-1}l}e_{2^kl}-\sum_{|\alpha |=k} c_\alpha S_\alpha S_\alpha^*e_{2^kl}\|=|z^{2^{k-1}l}- c_{(2,\ldots, 2)}| <\varepsilon\; 
$$
As $z$ is not a root of unity, then $z^{2^{k-1}}$ is not a root of unity either. Therefore, $\{ (z^{2^{k-1}})^l \}_{l\in \mathbb{Z}}$ is dense in $\mathbb{T}$, which contradicts the inequality above. 

\end{proof}

Furthermore, there is a simple invariance property we would like to stress.
\begin{lemma}\label{invariance-Ad-U}
The image of  $\;\; \check{ }\; $ is invariant under $\Ad(U^k)$, for any $k\in\mathbb{Z}$.
\end{lemma}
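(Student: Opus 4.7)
The plan is to exploit the normalizer relation in Proposition \ref{Galois} together with the global $\D_2$-invariance of $\Ad(U)$. Since $\Ad(U)$ leaves $\D_2$ globally invariant (as recalled in the preliminaries), one has $\Ad(U)\in\Aut(\Q_2,\D_2)$. On the other hand, part $(3)$ of Proposition \ref{Galois}, applied to the MASA $\D_2\subset\Q_2$, tells us that $\Aut(\Q_2,\D_2)$ coincides with $N_{\Aut_{\D_2}(\Q_2)}(\Aut(\Q_2))$, i.e.\ the largest subgroup of $\Aut(\Q_2)$ in which $\Aut_{\D_2}(\Q_2)$ sits as a normal subgroup. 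In particular, conjugation by $\Ad(U^k)$ preserves $\Aut_{\D_2}(\Q_2)$ for every $k\in\mathbb{Z}$.

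With this in hand, I would argue as follows. Given $d\in\widetilde\U(\D_2)$, set $\alpha=\widetilde{\lambda_d}$ and define
\[
\alpha_k\doteq\Ad(U^k)\circ\alpha\circ\Ad(U^{-k}).
\]
By the preceding remark $\alpha_k\in\Aut_{\D_2}(\Q_2)$, so that $\alpha_k=\widetilde{\lambda_{d'}}$ for a unique $d'\in\widetilde\U(\D_2)$. Using the identity $\alpha(U)=\check{d}U$, a direct computation yields
\[
\alpha_k(U)\;=\;U^k\,\alpha(U)\,U^{-k}\;=\;U^k\check{d}\,U^{-k+1}\;=\;\bigl(U^k\check{d}U^{-k}\bigr)\,U,
\]
and therefore $\check{d'}=\Ad(U^k)(\check{d})$. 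Hence every element in the image of $\;\check{}\;$ is mapped again into the image by $\Ad(U^k)$, which is precisely the invariance statement.

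The argument is essentially formal once the normalizer structure is invoked; the only thing worth noticing is that $\Ad(U^k)(\check{d})$ remains in $\U(\D_2)$, which is immediate from the global $\Ad(U)$-invariance of $\D_2$. Thus no genuine obstacle arises, and I expect the proof to fit in a few lines along the outline above.
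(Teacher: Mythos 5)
Your proof is correct and follows essentially the same route as the paper: conjugate the extension $\widetilde{\lambda_d}$ by $\Ad(U^k)$, observe the result lies in $\Aut_{\D_2}(\Q_2)$, and read off its action on $U$ to get $\Ad(U^k)(\check{d})$ as the new check unitary. The only cosmetic difference is that you invoke Proposition \ref{Galois} to see that the conjugate fixes $\D_2$ pointwise, where the paper verifies this directly from the $\Ad(U)$-invariance of $\D_2$.
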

\begin{proof}
Set $\alpha= \Ad(U^k)\circ\lambda_d\circ\Ad(U^{-k})$. Then the equalities $\alpha(U)=U^k\check{d}U^{-k}U$ and $\alpha(x)= x$ for any $x\in \D_2$ are both easily verified. The conclusion is thus arrived at.
\end{proof}
We have finally gathered all the tools necessary to carry out our proof.
\begin{theorem}
Let $d$ be an extendible unitary in $\D_2$, then $d$ is localized if and only if $\check{d}$ is localized.
\end{theorem}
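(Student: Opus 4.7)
The forward implication ``$d$ localized $\Rightarrow$ $\check{d}$ localized'' is already Corollary \ref{loccheck}; only the converse needs attention. So I assume $\check{d}\in \U(\D_2^m)$ for some $m\geq 0$. By composing with a suitable gauge automorphism I may normalize $d(0)=d(-1)=1$, and I work throughout in the canonical representation, in which $\D_2^N$ is identified with the $*$-subalgebra of diagonal operators on $\ell_2(\mathbb{Z})$ whose diagonal sequence is periodic of period $2^N$. In these terms the goal becomes: show that the sequence $\{d(k)\}_{k\in\mathbb{Z}}$ is periodic of period $2^M$ for some $M$.

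The first step is to extract from the two scalar relations \eqref{eq-1} and $\check{d}(2k+1)d(2k)=d(2k+1)$ a first-order recursion for the even entries of $d$. Substituting the second identity (with $k$ replaced by $k-1$) into the first eliminates $d(2k-1)$ and yields
$$f(k)=g(k)f(k-1),\qquad f(k)\doteq d(2k),\quad g(k)\doteq \frac{\check{d}(2k-1)\check{d}(2k)}{\check{d}(k)}.$$
Since $\check{d}$ has period $2^m$, so does $g$; telescoping over a full period then gives $f(k+2^m)=C\,f(k)$ for every $k\in\mathbb{Z}$, with $C\doteq \prod_{j=1}^{2^m}g(j)$ independent of $k$. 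In particular $f(n\cdot 2^m)=C^n$ for every $n\in\mathbb{Z}$.

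The main obstacle is to deduce from the hypothesis $d\in \D_2$ (and not merely $d\in \ell_\infty(\mathbb{Z})$) that $C\in\mathbb{T}$ is necessarily a $2^N$-th root of unity for some $N\geq 0$. Under the identification of the spectrum of $\D_2$ with $\{1,2\}^{\mathbb{N}}$, a direct computation from $P_\alpha(k)=1$ iff $k\equiv r(\alpha)\pmod{2^{|\alpha|}}$ shows that the character of $\D_2$ corresponding to the integer $2^s$ agrees with the one at $0$ (namely $(2,2,2,\ldots)$) on the first $s$ coordinates, so in the Cantor topology the former converges to the latter as $s\to\infty$. Continuity of $d$ then yields $d(2^{m+j+1})\to d(0)=1$, whereas by the recursion $d(2^{m+j+1})=f(2^{m+j})=C^{2^j}$; hence $C^{2^j}\to 1$. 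A local analysis of the squaring map at $1\in\mathbb{T}$, which is expanding with factor $2$ (so once $C^{2^j}$ is close enough to $1$ its argument doubles at each step and can only keep shrinking if it is already zero), forces $C^{2^j}=1$ for $j$ large, that is $C$ is of $2$-power order. This step is morally a generalization of Proposition \ref{prop-intersec-tor-check}, which treats the special case $m=0$, $\check{d}\equiv C$.

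Once $C^{2^N}=1$, iteration of $f(k+2^m)=Cf(k)$ gives $f(k+2^{m+N})=f(k)$; equivalently, the even entries of $d$ are periodic of period $2^{m+N+1}$ in their index. Feeding this back into the companion identity $d(2k+1)=\check{d}(2k+1)d(2k)$, and using that $\check{d}$ itself has period $2^m$ which divides $2^{m+N+1}$, the same periodicity is inherited by the odd entries, so $d\in \U(\D_2^{m+N+1})$ is localized. The only substantive step is the third one; everything else amounts to algebraic bookkeeping with the two scalar relations and the identification of $\D_2^N$ with the period-$2^N$ diagonals.
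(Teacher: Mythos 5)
Your proof is correct, but it takes a genuinely different route from the paper's. The paper argues structurally: it writes $\check{d}=\sum_{i=0}^{2^k-1}c_i\,U^iS_2^k(S_2^k)^*U^{-i}$, uses the $\Ad(U)$-invariance of the image of the check map (Lemma \ref{invariance-Ad-U}) to see that the scalar $\prod_i c_i$ lies in that image, invokes Proposition \ref{prop-intersec-tor-check} to conclude it is a root of unity of $2$-power order, and then (after removing that root of unity by a gauge-type twist) solves an explicit cyclic system for coefficients $\alpha_i$ to produce a localized $\tilde{d}\in\U(\D_2^k)$ with $\tilde{d}U\tilde{d}^*U^*=\check{d}$; since the kernel of the check map consists of scalars (Proposition \ref{ker-check}), $d$ then differs from the localized unitary $\tilde{d}\varphi(\tilde{d})^*$ by a scalar and is localized. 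You instead work entirely with diagonal entries in the canonical representation: your telescoped recursion $f(k+2^m)=Cf(k)$ is correct, your constant $C$ is in fact the very same scalar $\prod_i c_i$ (because $\prod_{j=1}^{2^m}\check{d}(2j-1)\check{d}(2j)$ is the square of the product of $\check{d}$ over one period, and one factor cancels against $\prod_j\check{d}(j)$), and your continuity argument along $2^s\to(2,2,2,\ldots)$ combined with the expanding behaviour of $z\mapsto z^2$ near $1$ is a self-contained re-proof of exactly the portion of Proposition \ref{prop-intersec-tor-check} that is needed; from $C^{2^N}=1$ the periodicity of the even and then the odd entries follows as you say, giving the explicit bound $d\in\U(\D_2^{m+N+1})$. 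What your approach buys is elementariness and an explicit localization degree, with no need for Lemma \ref{invariance-Ad-U} or the construction of $\tilde{d}$; what the paper's approach buys is the extra structural information that, up to a scalar, $d=\tilde{d}\varphi(\tilde{d})^*$ with $\tilde{d}$ localized, which ties in directly with the main theorem that extendible localized diagonal automorphisms are products of gauge and inner ones.
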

\begin{proof}
Obviously, all we have to do is prove that $d$ is algebraic if $\check{d}$ is. If $\check{d}$ is in $\D_2^k$, then it takes the form
$\check{d}=\sum_{i=0}^{2^k-1}c_i U^iS_2^k(S_2^k)^*U^{-i}$, where the $c_i$'s are all in $\mathbb{T}$. By the above lemma the product 
$\prod_{i=0}^{2^k-1} U^i\check{d}U^{-i}=\big(\prod_{i=0}^{2^k-1} c_i \big)1$ is still in the range of the map $\;\check{ }\;$. Now
Proposition \ref{prop-intersec-tor-check} tells us that $\prod_{i=0}^{2^k-1} c_i$ is a root of unity of order $2^k$, for some $k\in\mathbb{N}$. \\
Suppose at first that $\prod_{i=0}^{2^k-1} c_i=1$.
We are therefore led to verify that there exists a $\tilde{d}\in\D_2^k$ such that $\tilde{d}U\tilde{d}^*U^*=\check{d}$.
As above, we can write $\tilde{d}$ as $\sum_{i=0}^{2^k-1}\alpha_i U^iS_2^k(S_2^k)^*U^{-i}$, which allows to recast the equation for $\tilde{d}$ in terms of its coefficients $\alpha_i$
\begin{align*}
 c_0  & = \alpha_0 \overline{\alpha_{2^k-1}}\\
 c_1  & = \alpha_1 \overline{\alpha_{0}}\\
 \vdots & \\
 c_{2^k-1} & = \alpha_{2^k-1} \overline{\alpha_{2^k-2}}
\end{align*}
To begin with, it is not restrictive to suppose $\alpha_0=1$. With this choice the solution is given by $\alpha_i= c_i c_{i-1}\cdots c_1$ for every $i\geq 1$. Note that the condition $\prod_{i=0}^{2^k-1} c_i =1$ guarantees the compatibility of the system.  Now we can deal with the general case, i.e. $\prod_{i=0}^{2^k-1} c_i=e^{2\pi i h/2^n}$ for some $h, n\in\mathbb{N}\setminus \{0\}$.
If we consider $\check{d'}\doteq \check{d}e^{-2\pi i h/2^{n+k}}$, then Proposition \ref{prop-intersec-tor-check} implies 
that $\check{d'}\in \{dUd^*U^*: d\in\U(\bigcup_k \D_2^k)\}$ if and only if $\check{d}\in \{dUd^*U^*: d\in\U(\bigcup_k \D_2^k)\}$. But now we have that
$$
\prod_{i=0}^{2^k-1} U^i\check{d'}U^{-i}=e^{-2\pi i h/2^{n}} \left(\prod_{i=0}^{2^k-1} c_i \right) =e^{-2\pi i h/2^{n}}e^{2\pi i h/2^{n}}=1
$$
and the thesis follows from the fist part of the argument.
\end{proof}

%


\section{An outlook for further results}

A natural way to complete the analysis hitherto conducted would 
require to attempt a generalization of Proposition \ref{loctrivial} to extend its reach to every $d\in\U(\D_2)$. 
In this regard, it is worth stressing that if $d \in \U(\D_2)$ is an extendible unitary satisfying $dS_2 = S_2$ then $\check{d}$ must satisfy
$$
\check{d}=\check{d}(0)\prod_{i= 1}^\infty (S_2^*)^i UdU^* S_2^i
$$ 
along with the couple of equations
$$
\left\{ \begin{array}{l}
\check{d} P_1 = d P_1\\
\check{d}P_{2_r 1}=(Ud^*U^*)_r\varphi^r(d)P_{2_r 1}, \, \textrm{for every}\,r\geq 1 \\
\end{array} \right.
$$\\
as follows from Theorem \ref{formula-d-check-d} and an easy induction. Here, we used $2_r$ for $2 \cdots 2$ ($r$ times) to ease the notation.
Surprisingly enough, even though $\check{d}$ is (over)determined by both the above formulas,
the sought extension of Proposition \ref{loctrivial} has nonetheless proved to be a difficult goal to accomplish.
A class of ostensively easier problems, however, arises as soon as further assumptions are made. One could for instance introduce the extra condition $d S_1^k = S_1^k$, for some $k \geq 2$, as well
as $dS_2=S_2$. Still, the problem thus obtained is a hard game to play.  Indeed, even dealing with the easiest case, i.e. $k=2$, requires a close look at the 
 odometer, see for instance \cite{kerr}, 
 \cite[p. 230, 231]{Davidson}. This is the map $T\in\textrm{Homeo}(K)$ given by
 
\begin{align*}
(T\{x_i\}_{i\geq 1})_j & 
\doteq\left\{ \begin{array}{cc}
1 & j<k\\
2 & j=k\\
x_j & j>k\\
\end{array} \right.
\end{align*}
with $k\doteq\inf\{j\; | \; x_j= 1\}\in \mathbb{N}\cup \{+\infty\}$, and $+\infty$ is attained only on the constant sequence $\{222\ldots\}$. As recalled in Section 2, the map $T$ implements the restriction of $\Ad(U)$
to the diagonal $\D_2$, that is $UdU^*(x)=d(Tx)$ for every $x\in\{1,2\}^\mathbb{N}\cong K$.\\
%

Here is the precise statement of the result alluded to above.
\begin{proposition}
The only extendible automorphism $\lambda_d$, $d \in \U(\D_2)$,  such that $dS_2=S_2$ and $dS_1^2=S_1^2$ is the identity.
\end{proposition}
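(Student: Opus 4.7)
Working in the canonical representation, I would begin by extracting the sparsity that the two hypotheses impose on $d$. Since $S_2 e_k = e_{2k}$ and $S_1^2 e_k = e_{4k+3}$, the conditions $d S_2 = S_2$ and $d S_1^2 = S_1^2$ translate respectively into $d(2k) = 1$ and $d(4k+3) = 1$ for every $k \in \mathbb{Z}$. Hence $d$ is potentially nontrivial only at indices $k \equiv 1 \pmod 4$, and I set $\zeta_l := d(4l+1) \in \mathbb{T}$; the aim is to show $\zeta_l = 1$ for every $l \in \mathbb{Z}$.

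The decisive step is to feed this sparsity into Corollary \ref{dcheck}. A direct computation gives $(d^* U d U^*)(m) = \overline{d(m)}\,d(m-1)$, and by the above both $d(m)$ and $d(m-1)$ equal $1$ whenever $m \equiv 0 \pmod 4$. Since $2^i k \equiv 0 \pmod 4$ as soon as $i \geq 2$, every factor $(S_2^*)^i d^* U d U^* S_2^i$ with $i \geq 2$ reduces to the identity, and the infinite product in Corollary \ref{dcheck} collapses to a single factor, yielding the closed form
\[
\check{d}(k) \;=\; \check{d}(0)\,(d^* U d U^*)(2k) \;=\; \check{d}(0)\,d(2k-1) \qquad (k \in \mathbb{Z}).
\]
Equivalently, $\check{d}(2j) = \check{d}(0)$ and $\check{d}(2j+1) = \check{d}(0)\,\zeta_j$ for every $j \in \mathbb{Z}$.

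Next, I would exploit the identity $\check{d}(2j+1) = d(2j+1)$, which follows from equation (\ref{Eq-ext-2}) of Lemma \ref{unirel} combined with $dS_2 = S_2$ (forcing $\check{d} S_1 = d S_1$). Splitting into the cases $j = 2l$ and $j = 2l+1$ and comparing with the formulas above yields $\zeta_l = \check{d}(0)\,\zeta_{2l}$ and $1 = \check{d}(0)\,\zeta_{2l+1}$ for every $l \in \mathbb{Z}$. Specializing the first relation at $l = 0$ gives $\check{d}(0) = 1$ (as $\zeta_0 \in \mathbb{T}$), which then reduces the system to $\zeta_{2l+1} = 1$ and $\zeta_{2l} = \zeta_l$ for every $l$. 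Writing any nonzero integer $l$ as $l = 2^n m$ with $m$ odd, an immediate induction gives $\zeta_l = \zeta_m = 1$.

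It remains to handle the sole outstanding value $\zeta_0 = d(1)$, which I would dispatch via continuity and minimality of the odometer. Under the identification $\D_2 \cong C(K)$ with $K = \{1,2\}^{\mathbb{N}}$, the basis vectors $e_k$ correspond to the $\Ad(U)$-orbit $\{\omega_k\}_{k \in \mathbb{Z}}$, which is dense in $K$. The image of this orbit under the homeomorphism $K \cong P_{12}$, $y \mapsto (1,2,y)$, is precisely $\{\omega_{4l+1}\}_{l \in \mathbb{Z}}$, hence dense in the clopen cylinder $P_{12}$; removing the single point $\omega_1$ preserves density, since $K$ is perfect. On the resulting dense subset of $K$ the function $d$ equals $1$ (by the hypotheses on $P_2 \cup P_{11}$, and by the previous paragraph on $\{\omega_{4l+1}:l \neq 0\}$), so continuity forces $\zeta_0 = d(\omega_1) = 1$ and one concludes $d = 1$. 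The main obstacle I anticipate is spotting the truncation in the second paragraph; once that observation is in hand, everything else reduces to algebraic bookkeeping and a clean density argument.
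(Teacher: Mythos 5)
Your argument is correct, but it follows a genuinely different route from the paper's. Both proofs start from the extendibility relations \eqref{Eq-ext-1}--\eqref{Eq-ext-2}, and both use $\check{d}S_1=dS_1$, i.e. $\check{d}(2k+1)=d(2k+1)$; the divergence is in how $\check{d}$ is controlled. The paper works at the spectrum level with the odometer: it derives $\check{d}(2x)\check{d}(1T(x))=\check{d}(x)$, shows $\check{d}$ is constantly $\check{d}(0)$ on all points beginning with $2$, pins $\check{d}(0)=1$ by plugging $k=1$ into \eqref{eq-1}, and then concludes $\check{d}=d$ so that Proposition \ref{fixed-point-check} finishes the job. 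You instead feed the sparsity $d(2k)=d(4k+3)=1$ into the product formula of Corollary \ref{dcheck}, notice that every factor with $i\geq 2$ is the identity (since $2^ik\equiv 0$ and $2^ik-1\equiv 3 \pmod 4$), and obtain the closed form $\check{d}(k)=\check{d}(0)\,d(2k-1)$; comparing with $\check{d}(2j+1)=d(2j+1)$ then yields the $2$-adic recursion $\zeta_{2l}=\zeta_l$, $\zeta_{2l+1}=1$, $\check{d}(0)=1$, which kills every entry of $d$ except $d(1)$. Your endgame -- density of $\{\omega_{4l+1}\}_{l\in\mathbb{Z}}$ in the cylinder of sequences starting with $(1,2)$, plus perfectness of the Cantor set, forcing $d(1)=1$ by continuity -- is sound: $S_1S_2e_l=e_{4l+1}$ indeed gives $(1,2,\omega_l)=\omega_{4l+1}$, the orbit is dense by minimality of the odometer (equivalently, by faithfulness of the canonical representation restricted to $\D_2$), and removing one point from a dense subset of a perfect set preserves density. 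In effect this step is an explicit version of the terse continuity remark that closes the proof of Proposition \ref{fixed-point-check}, which the paper invokes instead. What your route buys is an explicit finite formula for $\check{d}$ under the hypotheses (the truncation of Corollary \ref{dcheck}), reducing everything else to arithmetic bookkeeping; the paper's route stays closer to the odometer dynamics, never needs the orbit-density argument separately, and reuses Proposition \ref{fixed-point-check} as a black box.
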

\begin{proof}
Under our hypotheses, Formula  \eqref{Eq-ext-2} at the beginning of Section 4 becomes $\check{d}S_1=dS_1$, which at the spectrum level reads
$d(1x)=\check{d}(1x)$ for all $x\in\{1,2\}^\mathbb{N}$. Formula  \eqref{Eq-ext-1} leads to $S_2^* \check{d} U\check{d}U^*S_2 =\check{d}$, which turns into $\check{d}(2 x)\check{d}(1T(x))=\check{d}(x)$
thanks to the equality $(S_2^*dS_2)(x)=d(2x)$. Now the condition $dS_1^2=S_1^2$ gives $\check{d}(11x)=d(11x)=1$ for all $x$.
Picking $x=2_i1 y$ we get that $\check{d}(2_{i+1}1y)\check{d}(1_{i+1}2y)=\check{d}(2_i1y)$ for all $i\geq 1$, and thus $\check{d}(2_{i+1}1y)=\check{d}(2_i1y)=\check{d}(21y)$. This says that the sequence  
$\{\check{d}(2_{i+1}1y)\}$ is constant. Therefore, each term of it equals its limit, i.e. $\check{d}(22\ldots)$. Now $\check{d}(2  2\ldots)=\check{d}(0)$  (in the canonical representation), 
 hence 
$\check{d}(2_{i}1y)=\check{d}(0)\doteq \alpha$ for every $y\in\{1,2\}^{\mathbb{N}}$. In particular, we find that $\check{d}(2y)=\alpha$ for all $y$. 
Under our hypothesis (and by the previous computations) we know that in the canonical representation $d(2k+1)=\check{d}(2k+1)$, $d(2k)=1$, and $\check{d}(2k)=\alpha$ for all $k\in \mathbb{Z}$. Thus if we plug  $k=1$ in \eqref{eq-1} we get 
$$
d(1)\alpha= d(1) \check{d}(2) = \check{d}(1) d(2)= d(1)d(2)=d(1)
$$
which shows that $\alpha =1$.
The conclusion is now achieved by a straightforward application of Proposition \ref{fixed-point-check}.
\end{proof}

\end{document}